\documentclass[twocolumn,amsthm,secthm]{autart}

\usepackage{graphicx}
\usepackage{amsmath}
\usepackage{amssymb}
\usepackage{bm}
\usepackage{enumitem}
\usepackage{placeins}
\usepackage{booktabs}

\usepackage{xcolor}
\usepackage{tikz}
\usetikzlibrary{arrows.meta,positioning,calc}
\tikzset{
  flowbox/.style={
    rectangle, rounded corners=3pt, draw=black, thick,
    fill=gray!12, text centered, font=\small,
    minimum height=1.6em, inner xsep=4pt, inner ysep=3pt
  },
  flowarrow/.style={-{Latex[length=5pt,width=4pt]}, thick, gray!60!black}
}

\numberwithin{equation}{section}
\numberwithin{figure}{section}
\numberwithin{table}{section}

\newcommand{\R}{\mathbb{R}}
\newcommand{\E}{\mathbb{E}}
\newcommand{\Var}{\mathrm{Var}}

\newcommand{\ud}{\,\mathrm{d}}

\newcommand{\xLbar}{\bar{x}^L}
\newcommand{\xFbar}{\bar{x}^F}
\newcommand{\vc}{v_c}
\newcommand{\smax}{s_{\max}}
\newcommand{\uF}{u^F}
\newcommand{\piF}{\pi^F}
\newcommand{\Mhat}{\widehat{M}}
\newcommand{\Mbar}{\overline{M}}
\newcommand{\FI}{\mathcal{I}}
\newcommand{\xFdot}{\dot{\bar{x}}^F}

\newcommand{\xFvec}{\bar{\mathbf{x}}^F}
\newcommand{\xLvec}{\bar{\mathbf{x}}^L}
\newcommand{\xFvdot}{\dot{\bar{\mathbf{x}}}^F}
\newcommand{\Jvec}{\mathbf{J}}
\newcommand{\gtil}{\tilde g}

\begin{document}

\begin{frontmatter}

\runtitle{Strategic Inference of Adversarial Navigation Objectives for UUVs}

\title{Strategic Inference of Adversarial Navigation Objectives for Unmanned Underwater Vehicles\thanksref{note}}

\thanks[note]{This paper was not presented at any IFAC meeting. This work was partially supported by the ONR grant under \#N00014-24-1-2432, the Simons Foundation (MP-TSM-00002783) and the NSF grant DMS-2420988. $^*$Corresponding author.}

\author[ucsb_math,ucsb_stats]{Ruimeng Hu}\ead{rhu@ucsb.edu},
\author[ucsb_math]{Xu Yang$^*$}\ead{xy6@ucsb.edu}

\address[ucsb_math]{Department of Mathematics, University of California, Santa Barbara, CA 93106, USA}
\address[ucsb_stats]{Department of Statistics and Applied Probability, University of California, Santa Barbara, CA 93106, USA}

\begin{keyword}
Unmanned underwater vehicles; intent inference; Fisher information.
\end{keyword}

\begin{abstract}
We study destination inference for adversarial unmanned underwater navigation in a spatially varying current field. The red vehicle is modeled as approximately following a Hamilton-Jacobi (HJ) time-optimal path toward an unknown destination, while a blue vehicle observes a noisy realization of that trajectory. We derive a continuous-time likelihood model for this problem and obtain a closed-form local maximum likelihood estimator, a constant-memory multi-period estimator, and an asymptotic Cram\'er-Rao efficiency result. The Fisher information is governed by a combined score kernel with two additive components, a policy-mean sensitivity and a reference-path sensitivity, and a multiplicative drift sensitivity whose leading geometric contribution is a contraction of the current Hessian with the Jacobi field of the HJ characteristic flow. All three sensitivities are induced by that Jacobi field, yielding a computable link between current-field geometry and destination identifiability. We further extend the framework to a sweep-dependent observation model and formulate an active sweep-design problem for the blue team. Numerical experiments in vortex and channel-shear currents validate the Cram\'er-Rao prediction and illustrate how current geometry determines which destinations can be reliably inferred.
\end{abstract}

\end{frontmatter}

\section{Introduction}
\label{sec:intro}

Unmanned underwater vehicles (UUVs) are increasingly deployed in contested environments. Consider a blue team UUV conducting harbor operations that detects a red team UUV nearby: the red team's destination is hidden, and the blue team must infer it from observed motion alone. This is the problem of \emph{adversarial intent inference}, and it informs any subsequent response the blue team may take.

\smallskip
\noindent\textbf{An illustrative operational scenario.} A coastal surveillance platform detects a foreign UUV crossing into territorial waters. Prior intelligence narrows the candidate destinations to a small set of strategically meaningful sites (a commercial port, a submarine-cable landing, a research facility). The platform observes a noisy realization of the UUV's trajectory over a finite window and must determine which destination is most consistent with that motion, in time to support downstream tasking. The present paper characterizes the information limit of this inference problem in terms of the current-field geometry and the red team's HJ-optimal navigation. The theory is developed for a one-dimensional destination family with continuous-parameter Fisher geometry; discrete candidate sets, as in the scenario above, reduce to pairwise likelihood ratios within the same framework.

\smallskip
This paper develops a continuous-time likelihood theory for the problem in current-driven dynamics and establishes information limits set by the geometry of the Hamilton-Jacobi (HJ) travel-time field rather than by the estimator. Two features shape the underwater setting. First, UUV trajectories reflect both intent and ocean current forcing, so the same observed motion can arise from different destinations in different current fields. Second, a rational red team does not move in straight lines: it follows the HJ time-optimal path, a curved trajectory determined by the global structure of the travel-time function. This path is the characteristic of a first-order HJ equation, and its sensitivity to the destination is governed by the Jacobi field of the characteristic flow, with degeneracy at conjugate points and the cut locus.

Our main result is that what the blue team can infer is controlled by the combined score kernel
\[
  \beta(t)=\tilde g(t)+h(t)\delta X_t,\qquad
  \tilde g(t):=g(t)+\gamma(t),
\]
built from three components: an additive policy-mean channel $g$ reflecting how the HJ reference enters the red team's linear-quadratic (LQ) feedback, an additive reference-path channel $\gamma$ arising from the mismatch between the observed motion and the nominal HJ reference, and a multiplicative drift-sensitivity channel $h$ whose leading geometric contribution is a current-Hessian/Jacobi contraction. Statistically the two additive channels enter through their sum $\tilde g$: within the affine Gaussian model, nearby candidate destinations are locally identifiable if and only if $\tilde g\not\equiv0$ or $h\not\equiv0$ on $[0,T]$. The current field therefore shapes both the rational red team's trajectory and, through the LQ sensitivities and the observation-model mismatch, the information available from observing it.

To get this result, we adapt the Stackelberg-type inference framework of \cite{hu2024strategic} to HJ-constrained UUV dynamics, replacing the prescribed smooth reference of that work with the HJ-optimal path and deriving a time-varying linearized model whose coefficients depend on the current gradient along the optimal characteristic. Under a local affine approximation of the destination-to-path map, the model yields a closed-form maximum likelihood estimator with its observed-information variance, an asymptotic Cram\'er-Rao efficiency result, and a sparse multi-period estimator with constant storage. The framework then extends to a sweep-dependent observation model in which the blue team's detection strength decays with its distance to the observed vehicle: the achievable Fisher information becomes a functional of the sweep pattern, and the resulting weighted-coverage optimization is a continuous-time active experiment-design problem whose optimum is bounded by the direct-observation Fisher information. Throughout, the affine approximation supplies the closed-form statistical formulas, while the geometric interpretation of the kernels uses the full nonlinear HJ structure through the destination-to-path map.

\smallskip
\noindent\textbf{Related literature.} Path planning for autonomous underwater vehicles in current fields is well studied \cite{garau2005path,petres2007path,lekkas2014guidance}; HJ methods are developed in \cite{brandman2023globally}, and reinforcement-learning guidance extending that framework appears in \cite{greeley2023reinforcement}. Neither addresses adversarial inference. The Stackelberg-type framework of \cite{hu2024strategic} is the closest predecessor: it uses a prescribed exogenous reference, constant dynamics coefficients, and a scalar parameter with prescribed sensitivity. Replacing these with their HJ counterparts introduces destination-dependent dynamics, characteristic-based sensitivity geometry, and singularities at the cut locus, none of which appear in the prescribed-reference setting. Active inverse learning in Stackelberg trajectory games \cite{ward2025active,ward2023active} treats finite hypothesis sets in current-free environments; the continuous-parameter Fisher geometry and cut-locus singularities studied here do not arise there.

Strategic information design in adversarial systems, where agents shape what can be inferred from their actions \cite{bergemann2019information,hu2024strategic,ralston2026information,kim2026deception,zhou2025adversarial,zhou2025integrating}, provides the broader context for our work. Our problem is also related to target tracking \cite{bar2004estimation}, stochastic system identification \cite{ljung1999system,astrom1971system}, dual control \cite{feldbaum1960dual,mesbah2018stochastic}, optimal experimental design \cite{pukelsheim2006optimal}, secure state estimation \cite{shoukry2015secure,pajic2017attack}, and travel-time tomography from mean-field game dynamics \cite{xu2026traveltime}. The distinction is that target tracking estimates a moving state without a rationality constraint, whereas here the hidden variable is a fixed destination parameter that shapes the entire trajectory through the HJ equation. Girsanov-based parameter estimation for linear SDEs \cite{kutoyants2013statistical,liptser1977statistics} provides the statistical foundation; the novelty is the connection to HJ path geometry. Thus our framework gives a tractable continuous-time setting in which inference limits are computable from local score kernels determined by the HJ characteristic flow.

\smallskip
\noindent\textbf{Organization.} Section~\ref{sec:problem} formulates the UUV encounter and HJ reference dynamics. Section~\ref{sec:follower} derives the red team's entropy-regularized local policy. Section~\ref{sec:inference} develops the observable-deviation likelihood, the MLE, and the sparse estimator. Section~\ref{sec:identifiability} characterizes the Fisher information geometry and local identifiability. Section~\ref{sec:numerics} presents numerical experiments in vortex and channel-shear currents. Section~\ref{sec:sweep_design} extends the model to sweep-dependent observations and active sensing. Section~\ref{sec:conclusion} concludes.

\FloatBarrier
\section{Problem Formulation}
\label{sec:problem}

This section formulates the adversarial encounter and the red team's HJ-optimal navigation model. Sections~\ref{sec:follower}--\ref{sec:numerics} then develop and validate the direct-observation inference pipeline, while Section~\ref{sec:sweep_design} extends the framework to distance-limited observation, where the blue team also designs its sweep pattern.

\noindent\textbf{Modeling pipeline and main insight.} We study the blue team's inference problem through the pipeline in Figure~\ref{fig:pipeline}: the red team plans an HJ time-optimal path toward its hidden destination, generates a noisy trajectory $X^F_{[0,T]}$ around that path, and the blue team compares the observed trajectory with the combined score kernel $\beta(t)$, determined by the HJ path geometry, to compute the estimator $\Mhat$; the Fisher information defined by $\beta(t)$ sets the fundamental estimation limit.

The main insight is: \emph{within the affine Gaussian model, local identifiability holds precisely when $\tilde g:=g+\gamma\not\equiv0$ or $h\not\equiv0$ on $[0,T]$} (Theorem~\ref{thm:identifiability}); all three kernels $g,\gamma,h$ are induced by the destination sensitivity of the HJ characteristic flow.

\begin{figure}[!t]
\centering
\begin{tikzpicture}[node distance=0.3cm and 0.25cm]
  \node[flowbox, text width=2.9cm, fill=red!8, draw=red!50!black] (dest)
    {\textbf{Destination}\\[1pt]\small $\mathbf x^F_T=\Gamma(M)$, hidden from blue};
  \node[flowbox, text width=2.9cm, fill=teal!10, draw=teal!50!black,
        right=0.5cm of dest] (cur)
    {\textbf{Current field}\\[1pt]\small $\vc$, known to both};
  \coordinate (mid) at ($(dest.south)!0.5!(cur.south)$);
  \node[flowbox, text width=3.6cm, fill=red!8, draw=red!50!black,
        below=0.55cm of mid] (hj)
    {\textbf{Red trajectory}\\[1pt]
     \small HJ path with noise; observed $\mathbf X^F_{[0,T]}$};
  \node[flowbox, text width=3.6cm, fill=blue!8, draw=blue!50!black,
        below=0.55cm of hj] (inf)
    {\textbf{Blue team inference}\\[1pt]
     \small $\beta(t)=\gtil(t)+h(t)\delta X_t$;\; $\Mhat$, $\FI(M)$};
  \draw[flowarrow] (dest) -- (hj);
  \draw[flowarrow] (cur) -- (hj);
  \draw[flowarrow] (hj) -- node[right, font=\scriptsize]{observe} (inf);
\end{tikzpicture}
\caption{Modeling pipeline. The hidden destination and current field determine the red team's HJ-optimal reference path and induce the stochastic observed trajectory. The blue team compares the observation with its nominal model through the combined score kernel $\beta$ to compute $\Mhat$, while $\FI(M)$ gives the fundamental information limit.}
\label{fig:pipeline}
\end{figure}

\subsection{UUV Transport Model and Current Field}
\label{subsec:transport}

Two UUVs operate in a bounded domain $\Omega \subset \R^2$ with ocean current field $\vc(\mathbf x) \in \R^2$, where $\mathbf x \in \R^2$ denotes position. For clarity we work in two spatial dimensions; the extension to three dimensions follows \cite{brandman2023globally}. The \emph{blue team} vehicle has position $\mathbf X^L_t$ and the \emph{red team} vehicle has position $\mathbf X^F_t$.

Vehicle motion follows the kinematic model of \cite{brandman2023globally} (see also \cite{fossen2021handbook}): each vehicle's absolute velocity equals its velocity relative to water plus the ocean current. The speed relative to water is bounded by $\smax>0$. We assume 
\begin{equation} \label{eq:current_bound} 
|\vc(\mathbf x)| < \smax \quad \forall \mathbf x \in \Omega, 
\end{equation} 
ensuring that minimum travel times are finite and well-defined for all destinations in $\Omega$. This holds for propelled UUVs in typical coastal currents ($\smax\sim$~1--2~m/s, $|\vc|\sim$~0.1--0.5~m/s), the setting we target, but fails for underactuated platforms such as gliders, where parts of $\Omega$ become unreachable and the HJ equation requires careful treatment at the reachability boundary \cite{lolla2014time}.

\subsection{UUV Dynamics and Red Team Reference}
\label{subsec:red_hj}

The red team is navigating toward a hidden destination $\mathbf x^F_T(M)=\Gamma(M)\in\Omega$, where $\Gamma:\mathcal M\to\Omega$ is a known smooth injection of a real parameter interval $\mathcal M\subseteq\R$ into the domain, and $M\in\mathcal M$ is the hidden \emph{destination parameter}. This covers the standard \emph{dilation family} $\Gamma(M)=M\mathbf{x}^{\mathrm{ref}}$ (with $\mathbf{x}^{\mathrm{ref}}\in\Omega$ fixed and $M$ scaling the distance along the ray) as well as general one-dimensional destination curves; the experiments in Section~\ref{sec:numerics} use both forms. The blue team observes the red team's trajectory but does not know $M$.

A rational red team vehicle, constrained by limited battery life, approximately follows the HJ time-optimal path to $\mathbf x^F_T(M)=\Gamma(M)$. The minimum arrival time $\uF(\mathbf x;M)$ satisfies the eikonal HJ equation \cite{brandman2023globally,osher1988fronts}:
\begin{equation}
  \label{eq:HJ_red}
  |\nabla \uF| - \frac{1}{\smax}\vc(\mathbf x)\cdot\nabla\uF
  = \frac{1}{\smax}, \quad \uF(\Gamma(M))=0.
\end{equation}
This is a first-order Hamilton-Jacobi equation with characteristic flow given by the optimal trajectories \eqref{eq:red_reference}.

\begin{assumption}
\label{ass:current} The current field satisfies $\vc\in C^2(\Omega)$ with $\|\nabla\vc\|_\infty<\infty$ and $\|\nabla^2\vc\|_\infty<\infty$, or the same bounds hold on a neighborhood of the relevant trajectories. This is a regularity assumption on $\vc$, not on the viscosity solution $\uF$, which is only Lipschitz in general: the $C^1$ regularity supports the characteristic description \eqref{eq:red_reference} away from singular sets of $\uF$, and the $C^2$ bound is used by the shear kernel \eqref{eq:h_explicit}.
\end{assumption}

Under Assumption~\ref{ass:current} and condition~\eqref{eq:current_bound}, equation~\eqref{eq:HJ_red} admits a unique Lipschitz-continuous viscosity solution \cite{crandall1983viscosity,bardi1997optimal,evans2002pde}, computed numerically by the fast sweeping method \cite{zhao2004fast}.

The red team's HJ-optimal reference trajectory $\xFbar(t;M)$ is the characteristic of \eqref{eq:HJ_red} through $x^F_0$:
\begin{equation}
  \label{eq:red_reference}
  \frac{d\xFvec}{dt}=-\smax\frac{\nabla\uF(\xFvec;M)}{|\nabla\uF(\xFvec;M)|}
  +\vc(\xFvec),\; \xFvec(0)=\mathbf x^F_0.
\end{equation}
The map $M\mapsto \xFvec(t;M)$ is the key analytic object of this paper. Its Jacobian $\partial_M\xFvec$ is the Jacobi field of the characteristic flow and measures how nearby optimal paths diverge as $M$ varies.

\begin{remark}
\label{rem:hj_challenges}
Replacing the prescribed smooth reference $F(t)$ of \cite{hu2024strategic} with the HJ-optimal path $\xFvec(t;M)$ introduces three analytic challenges. First, the dynamics coefficient is time-varying and $M$-dependent. Second, the viscosity solution $\uF$ is only Lipschitz continuous, so $\xFvec(t;M)$ may fail to be smooth in $M$ at values where $\mathbf x^F_0$ lies on the cut locus associated with the target $\mathbf x^F_T(M)$. Third, the HJ equation may develop nonsmoothness in $\nabla\uF$ when characteristic branches meet, producing discontinuities in $A_F(t,M)$ that have no counterpart in the smooth-reference setting.
\end{remark}

\begin{remark}
\label{rem:rational}
The HJ path is not assumed cooperative: any vehicle with a finite energy budget has an incentive to minimize travel time, and thus approximately follows the HJ-optimal path. Deviations due to, e.g., obstacle avoidance or conservative depth management are captured by the stochastic noise term in the linearized dynamics below.
\end{remark}

\begin{assumption}
\label{ass:regular_char}
For the nominal parameter $M_0\in\mathcal M$, the HJ viscosity solution $\uF(\cdot;M_0)$ is $C^2$ in a tubular neighborhood of the characteristic $\xFvec(\cdot;M_0)$, and the map $M\mapsto \xFvec(\cdot;M)$ is $C^1$ in a neighborhood of $M_0$. This assumption rules out the local cut-locus, conjugate-point, and branch-merging singularities that would destroy differentiability of the characteristic with respect to $M$.
\end{assumption}

This is a local nondegeneracy condition: the affine expansion of Assumption~\ref{ass:affine} below is well-defined precisely under it, and the Jacobi field $\Jvec(t)=\partial_M \xFvec(t;M_0)$ used throughout Section~\ref{sec:identifiability} exists as a $C^1$ function of $t$. Behavior near cut-locus or conjugate-point singularities is outside the local theory developed here.

Let $\delta \mathbf X^F_t=\mathbf X^F_t-\xFvec(t;M)$ denote the red team's deviation from its HJ reference trajectory. The perturbation dynamics are
\begin{multline}
  \label{eq:red_perturb}
  \ud\delta \mathbf X^F_t=\Bigl(A_F(t,M)\delta \mathbf X^F_t
  +\mathbf B_F\int_\R y\,\piF_t(y)\ud y\Bigr)\ud t\\
  +\sigma_F\ud \mathbf W^F_t,
\end{multline}
where $\mathbf W^F_t$ is a standard two-dimensional Brownian motion, $\sigma_F>0$, $\mathbf B_F\in\R^2$ is a fixed control-input direction, and $\piF_t$ is the red team's randomized control policy. The stochastic term $\sigma_F\ud W^F_t$ represents {process-level} randomness in the red trajectory dynamics (e.g., unmodeled sub-grid current fluctuations, control noise); we do not model additional {measurement-level} sensor noise in the blue team's observation of $\mathbf X^F_t$, which would replace the exact likelihood~\eqref{eq:loglik} with a filtering problem. The time-varying coefficient
\begin{equation}
  \label{eq:AF}
  A_F(t,M)=\nabla\vc\!\left(\xFvec(t;M)\right)
\end{equation}
is the Jacobian of the current field evaluated along the reference trajectory.

\begin{remark}
\label{rem:AF_origin}
Equation~\eqref{eq:AF} arises from a first-order Taylor expansion of $\vc(\mathbf X^F_t)$ around $\xFvec(t;M)$. A full linearization of the HJ-guided drift would also include the state derivative of the heading field $-\smax\nabla\uF/|\nabla\uF|$, which involves $\nabla^2\uF(\xFvec;M)$ through the normalization of $\nabla\uF$. In this work we model the state-dependence of the local drift through the current-shear term alone, and treat the omitted heading-linearization contribution as part of the unmodeled perturbations absorbed by the noise term $\sigma_F \ud W^F_t$. This corresponds to a shear-dominated regime, in which the heading-linearization term is small relative to the current-shear term along the reference path. Since both terms can be computed from the HJ solution and the current field, this regime can be checked numerically for a given nominal destination.
\end{remark}

\subsection{Adversarial Encounter and Information Structure}
\label{subsec:encounter}
The blue team follows a prescribed sweep pattern $\xLbar(t)$ determined by mission requirements. This pattern is first treated as part of the known observation geometry; Section~\ref{sec:sweep_design} then extends the framework to distance-dependent sensing and active sweep design.

The encounter has a sequential Stackelberg-type information structure \cite{basar1998dynamic,yong2002leader,hu2024strategic} in three steps: (1) the blue team executes its sweep $\xLvec(t)$, observable to the red team; (2) the red team navigates toward $\mathbf x^F_T(M)$, injecting randomness into its trajectory; (3) the blue team observes $\mathbf X^F_{[0,T]}$ and infers $\Mhat$. The term ``Stackelberg'' refers to this commitment-and-observation ordering inherited from \cite{hu2024strategic}; the red team's cost does not depend on the blue sweep, so the coupling is one-directional and informational rather than through payoffs: the red team's HJ-optimal behavior imprints a geometric signature on its trajectory, and the blue team reads that signature. A two-parameter extension where the red team also responds to the blue position is left for future work.

The sequential structure reflects information asymmetry: the blue team knows the red team's navigation model, dynamics, and cost structure, but neither $M$ nor the entropy weight $\lambda_F$ (Remark~\ref{rem:lambda_F} shows the latter is not needed for inference); the red team knows its own dynamics, $M$, and $\lambda_F$, and observes the blue sweep but does not model the blue team's inference objective. The red team's filtration satisfies $\sigma(\xLvec(s),0\le s\le T)\subset\mathcal F^F_0$, so the entire blue sweep is observable to the red team from time zero.

\FloatBarrier
\section{Red Team Randomized Navigation Policy}
\label{sec:follower}
For tractability of the local inference model, we work with a scalar observation coordinate. Fix a direction $e\in\R^2$ with $\|e\|=1$. Let \(\mathbf X_t^F\) and \(\xFvec(t;M)\) denote the two-dimensional state and HJ reference path. With slight abuse of notation, we write
\begin{gather*}
  X_t^F := e^\top \mathbf X_t^F,\qquad
  \xFbar(t;M):=e^\top \xFvec(t;M),\\
  \delta X_t^F:=X_t^F-\xFbar(t;M)
\end{gather*}
for the scalar projections of the state, reference, and deviation, and $B_F:=e^\top\mathbf B_F$; since $\|e\|=1$, the projected noise $e^\top\mathbf W^F_t$ is again a standard one-dimensional Brownian motion, still denoted $W^F_t$. The scalar projection of the current-shear coefficient along $e$ is
\[
  A_F(t,M):=e^\top\nabla\vc(\xFvec(t;M))\,e.
\]

\begin{assumption}
\label{ass:scalar_model}
The local deviation dynamics close, up to negligible off-axis coupling, at the scalar level with drift coefficient $A_F(t,M)$. More precisely, we neglect the coupling $e^\top\nabla\vc(\xFvec)\,e^\perp$ along the reference path, where $e^\perp$ is the orthogonal unit direction.
\end{assumption}

Assumption~\ref{ass:scalar_model} will be used for Sections~\ref{sec:follower}--\ref{sec:sweep_design}; the full vector model leads to the same structure with a matrix Riccati equation and a matrix Fisher information. Geometric quantities evaluated along the path, such as $\nabla\vc$, $\nabla^2\vc$, and the sweep distance in Section~\ref{sec:sweep_design}, always refer to the two-dimensional reference $\xFvec(t;M)$. 

We model the red team's local behavior around the HJ reference by an entropy-regularized LQ problem \cite{sutton2018reinforcement,schulman2017proximal} for the deviation dynamics. In addition to control effort and entropy regularization, the running cost contains a mild quadratic regularization in the projected state. After writing $X_t^F=\xFbar(t;M)+\delta X_t^F$, this regularization expands to
\[
  \tfrac{Q_F}{2}(\xFbar)^2
  + Q_F\,\xFbar(t;M)\,\delta X_t^F
  + \tfrac{Q_F}{2}(\delta X_t^F)^2;
\]
the constant in $\delta X_t^F$ does not affect optimization, the deviation-quadratic term contributes to the Riccati equation as usual, and the cross term $Q_F\,\xFbar(t;M)\,\delta X_t^F$ is the channel through which the HJ reference enters the policy mean. The location of the origin is immaterial: shifting it produces the same cross-term structure up to a deterministic constant.

The red team minimizes
\begin{multline}
  \label{eq:follower_obj}
  J^F(\piF;M):=\E\Bigg[\int_0^T\Bigg(
    \frac{Q_F}{2}\bigl(X^F_t\bigr)^2
    +\frac{R_F}{2}\int_\R y^2\piF_t(y)\ud y\\
    +\lambda_F\int_\R \piF_t(y)\log\piF_t(y)\ud y
  \Bigg)\ud t\Bigg],
\end{multline}
with $Q_F,R_F,\lambda_F>0$.

\begin{remark}[Interpretation of the regularization]
\label{rem:deviation_sources}
The formulation~\eqref{eq:follower_obj} describes a red team that {approximately} follows its HJ-optimal trajectory rather than realizing it exactly. The entropy regularization $\lambda_F$ and the control penalty $R_F$ together act as a tractable proxy for several operationally realistic sources of deviation that we do not model individually: (i) state and current uncertainty on the red team's side; (ii) randomized or evasive behavior (a maximum-entropy stand-in for non-deterministic decision-making \cite{jaynes1957information}); (iii) unmodeled operational constraints (depth profile, fuel, comms scheduling) that perturb the time-optimal path. The framework does not require identifying the dominant physical source of deviation: the inference results of Sections~\ref{sec:inference}--\ref{sec:identifiability} depend only on the effective parameters \(\sigma_F,Q_F,R_F\), not on whether these parameters arise from current uncertainty, control noise, randomized behavior, or operational constraints.
\end{remark}

\begin{proposition}[Red team optimal policy]
\label{prop:follower_opt}
Under Assumptions~\ref{ass:current} and~\ref{ass:scalar_model}, the red team's optimal randomized policy for \eqref{eq:follower_obj}--\eqref{eq:red_perturb} is Gaussian:
\begin{equation}
  \label{eq:follower_policy}
  \pi^{F,*}(\cdot\mid t,\delta x)=\mathcal N\!\left(
  -\frac{B_F}{R_F}\bigl(2a_t\delta x+b_t\bigr),\frac{\lambda_F}{R_F}
  \right),
\end{equation}
where $a_t$ satisfies
\begin{equation}
  \label{eq:ode_a}
  \dot a_t=\frac{2B_F^2}{R_F}a_t^2-2A_F(t,M)a_t-\frac{Q_F}{2},
  \quad a_T=0,
\end{equation}
and $b_t$ satisfies
\begin{equation}
  \label{eq:ode_b}
  \dot b_t=\frac{2B_F^2}{R_F}a_tb_t-A_F(t,M)b_t-Q_F\xFbar(t;M),
  \quad b_T=0.
\end{equation}
\end{proposition}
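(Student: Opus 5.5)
The plan is a standard dynamic-programming argument for the scalar deviation model, with a quadratic ansatz for the value function. By Assumption~\ref{ass:scalar_model}, the controlled state is the scalar $\delta X^F_t$, with dynamics $\ud\delta X^F_t=(A_F(t,M)\delta X^F_t+B_F\int_\R y\,\pi_t(y)\ud y)\ud t+\sigma_F\ud W^F_t$. Since $\xFbar(t;M)$ is a deterministic function of time, the running cost is $\tfrac{Q_F}{2}(\xFbar+\delta x)^2+\tfrac{R_F}{2}\int y^2\pi+\lambda_F\int\pi\log\pi$. The exploratory HJB equation for $V(t,\delta x)$ is
\[
\partial_tV+\inf_{\pi}\int_\R\Bigl[(A_F\delta x+B_Fy)\partial_xV+\tfrac{R_F}{2}y^2+\lambda_F\log\pi(y)\Bigr]\pi(y)\ud y+\tfrac{Q_F}{2}(\xFbar+\delta x)^2+\tfrac{\sigma_F^2}{2}\partial_{xx}V=0,
\]
with terminal condition $V(T,\cdot)=0$.

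The inner minimization over probability densities is a Gibbs variational problem. Its minimizer is $\pi^*(y)\propto\exp\bigl(-(B_Fy\,\partial_xV+\tfrac{R_F}{2}y^2)/\lambda_F\bigr)$. Completing the square gives a Gaussian with mean $-B_F\partial_xV/R_F$ and variance $\lambda_F/R_F$. The minimized value equals
\[
A_F\delta x\,\partial_xV-\frac{B_F^2}{2R_F}(\partial_xV)^2+c_0,
\]
where $c_0$ depends only on $\lambda_F$ and $R_F$. I then insert the ansatz $V=a_t\delta x^2+b_t\delta x+c_t$, so that $\partial_xV=2a_t\delta x+b_t$. This immediately gives the policy mean $-\tfrac{B_F}{R_F}(2a_t\delta x+b_t)$ in \eqref{eq:follower_policy}.

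Next I match coefficients in $\delta x$. The $\delta x^2$ terms give $\dot a_t+2A_Fa_t-\tfrac{2B_F^2}{R_F}a_t^2+\tfrac{Q_F}{2}=0$, which is \eqref{eq:ode_a}. The $\delta x$ terms give $\dot b_t+A_Fb_t-\tfrac{2B_F^2}{R_F}a_tb_t+Q_F\xFbar(t;M)=0$, which is \eqref{eq:ode_b}. The constant terms give a linear ODE for $c_t$, which does not affect the policy. All three terminal conditions are zero because there is no terminal cost.

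The main obstacle is rigor rather than algebra, and it has two parts. First, I must show that the Riccati equation \eqref{eq:ode_a} has a solution on all of $[0,T]$. By Assumptions~\ref{ass:current} and~\ref{ass:regular_char}, $A_F(t,M)$ is bounded and measurable along the reference path. In reversed time $s=T-t$, the equation reads $a'=-\tfrac{2B_F^2}{R_F}a^2+2A_Fa+\tfrac{Q_F}{2}$ with $a(0)=0$. Comparison then shows that $a\ge0$, since $a'>0$ at $a=0$. It also shows that $a$ is bounded above by the solution of the linear equation obtained by dropping the negative quadratic term. So $a$ cannot blow up, and once $a$ is known, \eqref{eq:ode_b} is linear with bounded coefficients, so $b$ exists as well. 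Second, I must verify optimality. I apply Itô's formula to $V(t,\delta X_t)$ under an arbitrary admissible policy with finite second moments; the quadratic growth of $V$ and the Gaussian moments of the state make the local-martingale term a true martingale. The Gibbs inequality then gives $J^F(\pi;M)\ge V(0,\delta x_0)$, with equality for $\pi^{F,*}$, which is the verification theorem.
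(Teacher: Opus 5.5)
Your proposal is correct and follows the paper's route: the quadratic ansatz $V=a_t(\delta x)^2+b_t\delta x+c_t$ in the HJB, the Gibbs/Gaussian minimization over the randomized policy, and matching the $\delta x^2$, $\delta x$ and constant coefficients give exactly \eqref{eq:follower_policy}--\eqref{eq:ode_b}.

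Your Riccati comparison argument is what the paper places separately in Proposition~\ref{thm:follower_wps}; note that boundedness of $A_F$ needs only Assumption~\ref{ass:current}, not Assumption~\ref{ass:regular_char}. Your verification step is extra rigor the paper omits. There, the martingale property should rest on the finite-second-moment admissibility condition rather than on Gaussianity, since the state is Gaussian only under affine feedback.
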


\begin{proof}
Expand $\tfrac{Q_F}{2}(\xFbar+\delta x)^2$ into a constant, a linear cross term $Q_F\xFbar\delta x$, and the quadratic $\tfrac{Q_F}{2}(\delta x)^2$. The HJB ansatz $V(t,\delta x)=a_t(\delta x)^2+b_t\delta x+c_t$, together with the standard Gaussian-policy minimization, yields \eqref{eq:follower_policy}. Matching coefficients of $\delta x^2$, $\delta x$, and the constant term gives \eqref{eq:ode_a}--\eqref{eq:ode_b} and an ODE for $c_t$, which does not affect the policy.
\end{proof}

\begin{remark}
\label{rem:siap_connection}
The forcing $Q_F\xFbar(t;M)$ in \eqref{eq:ode_b} makes $b_t$ nonzero and $M$-dependent: it is the channel through which the HJ reference enters the policy mean. Combined with the $M$-dependence of $A_F$ and the sensitivity of the reference trajectory itself to $M$, this yields three inference channels: an additive policy-mean channel through $\partial_M b_t$, a multiplicative drift channel through $\partial_M A_F$ (whose leading geometric content is the current Hessian contracted with the Jacobi field), and an additive reference-path channel arising from the observation-model mismatch; see Section~\ref{subsec:girsanov}. All three trace back to the Jacobi field $\partial_M\xFbar$ of the HJ characteristic flow (Section~\ref{subsec:geometric_h}). The framework of \cite{hu2024strategic} contains only an additive channel with forcing from the leader trajectory rather than the HJ reference; the multiplicative channel, the reference-path channel, and their characteristic-flow interpretation are new to the present work.
\end{remark}

\begin{proposition}[Well-posedness]
\label{thm:follower_wps}
Under Assumptions~\ref{ass:current} and~\ref{ass:scalar_model}, the Riccati ODE \eqref{eq:ode_a} and the forced linear ODE \eqref{eq:ode_b} each admit a unique solution on $[0,T]$ for all $T>0$.
\end{proposition}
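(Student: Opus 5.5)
Because the Riccati equation \eqref{eq:ode_a} is posed backward from $a_T=0$, the natural approach is to reverse time with $\tau=T-t$ and $\alpha_\tau:=a_{T-\tau}$. The problem then becomes the forward initial value problem
\[
  \frac{d\alpha_\tau}{d\tau}=-\frac{2B_F^2}{R_F}\alpha_\tau^2+2A_F(T-\tau,M)\alpha_\tau+\frac{Q_F}{2},\qquad \alpha_0=0 .
\]
The coefficient $A_F(t,M)=e^\top\nabla\vc(\xFvec(t;M))\,e$ is continuous in $t$ and bounded by $\|\nabla\vc\|_\infty$. This follows from Assumption~\ref{ass:current}, since $\vc\in C^2$, together with the continuity of the characteristic $\xFvec(\cdot;M)$, which is an absolutely continuous solution of \eqref{eq:red_reference}. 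The right-hand side is therefore continuous in $\tau$ and locally Lipschitz in $\alpha$, and Picard--Lindel\"of gives a unique local solution. Uniqueness holds on any interval where the solution exists. Only global existence on $[0,T]$ remains, and for that we need an a priori bound that rules out finite-time blow-up.

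The bound comes from a comparison argument. First, $\alpha$ stays nonnegative: at $\alpha=0$ the derivative equals $Q_F/2>0$, so $\alpha_\tau>0$ for $\tau>0$, because the solution can never cross zero from above. Second, $\alpha$ is bounded above. For $\alpha\ge0$ the quadratic term $-\tfrac{2B_F^2}{R_F}\alpha^2\le 0$ only helps, so
\[
  \dot\alpha\le 2\|\nabla\vc\|_\infty\,\alpha+\tfrac{Q_F}{2}.
\]
Gr\"onwall then yields $0\le\alpha_\tau\le \tfrac{Q_F}{2}\tau\, e^{2\|\nabla\vc\|_\infty\tau}$. This covers the degenerate case $B_F=0$ as well, where the equation is linear. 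With both bounds in hand, the standard continuation theorem extends the solution to all of $[0,T]$ for every $T>0$. Equivalently, $a_t=V$-coefficient is the value of a positive semidefinite LQ problem, but the direct comparison argument is self-contained.

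Once $a$ is known, \eqref{eq:ode_b} is a linear ODE $\dot b_t=c(t)b_t+f(t)$ with
\[
  c(t)=\tfrac{2B_F^2}{R_F}a_t-A_F(t,M),\qquad f(t)=-Q_F\xFbar(t;M).
\]
Both coefficients are continuous and bounded on $[0,T]$, using that $a$ is bounded from the previous step and $\xFbar$ is continuous. The variation of constants formula then gives the unique solution with $b_T=0$ in explicit form,
\[
  b_t=\int_t^T e^{-\int_t^s c(r)\,dr}\,Q_F\,\xFbar(s;M)\,ds .
\]
Uniqueness is standard for linear equations. The main obstacle is the no-blow-up bound for the Riccati equation. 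The one subtlety there is the sign of $\alpha$, which I secure by the invariance argument at $\alpha=0$. I would also remark that if $\nabla\uF$ were nonsmooth, $A_F$ could only be measurable and bounded; the same argument then goes through in the Carath\'eodory sense.
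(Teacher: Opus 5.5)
Your proof is correct and follows the same route as the paper: boundedness of $A_F$ plus a comparison argument to rule out blow-up of the backward Riccati equation \eqref{eq:ode_a}, then linear ODE theory for \eqref{eq:ode_b}. The paper only cites the ``standard comparison principle''; you make it explicit through the invariance of $\{\alpha\ge0\}$ and the Gr\"onwall upper bound, and you also write out the variation-of-constants formula for $b_t$.
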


\begin{proof}
Since $A_F(t,M)$ is bounded on $[0,T]$ and \eqref{eq:ode_a} has the Riccati sign of a finite-horizon LQ problem with $a_T=0$, the standard comparison principle rules out finite-time blow-up, giving a unique global solution. Given $a_t$, \eqref{eq:ode_b} is linear with bounded coefficients and $b_T=0$, hence admits a unique solution on [0,T].
\end{proof}

Under policy \eqref{eq:follower_policy}, the optimal deviation process $\delta X_t^{F,*}=X_t^{F,*}-\xFbar(t;M)$ satisfies
\begin{equation}
  \label{eq:red_state}
  \ud\delta X_t^{F,*}=\bigl[f(t,M)\delta X_t^{F,*}+d(t,M)\bigr]\ud t
  +\sigma_F\ud W_t^F,
\end{equation}
with
\begin{equation}
  \label{eq:fd_def}
  \begin{aligned}
  f(t,M)&:=A_F(t,M)-\frac{2B_F^2}{R_F}a_t(M),\\
  d(t,M)&:=-\frac{B_F^2}{R_F}b_t(M).
  \end{aligned}
\end{equation}
Thus $M$ enters the observed dynamics through both $f$ (drift coefficient along the reference, combining the current gradient $A_F$ and the Riccati response) and $d$ (policy-mean response to the reference), in addition to the reference trajectory itself.

\begin{remark}
\label{rem:observable_deviation}
The deviation $\delta X_t^{F,*}=X_t^{F,*}-\xFbar(t;M)$ of~\eqref{eq:red_state} is centered at the \emph{true} $M$ and is not directly observable to the blue team. Section~\ref{sec:inference} works with an observable deviation centered at the {nominal} $M_0$; at $M=M_0$ the two coincide.
\end{remark}

\begin{remark}
\label{rem:lambda_F}
The entropy weight $\lambda_F$ governs the variance of the randomized policy but does not appear in the optimal state dynamics \eqref{eq:red_state}. In the continuous-time formulation, the controlled drift depends only on the policy mean. Consequently the MLE and Fisher information are independent of $\lambda_F$: in an Euler discretization with step $\Delta t$, policy noise enters the one-step variance at order $(\Delta t)^2$ against $\sigma_F^2\Delta t$ for the diffusion, so its contribution vanishes as $\Delta t\to0$.
\end{remark}

\section{Intent Inference and Sparse History Tracking}
\label{sec:inference}

Section~\ref{sec:follower} derived the red team's closed-loop local dynamics. This section recenters the model at the nominal parameter $M_0$ to obtain an observable likelihood, the closed-form MLE, and a sparse multi-period estimator.

\subsection{Observable Deviation and Log-Likelihood}
\label{subsec:girsanov}

The blue team observes the projected red-team position $X_t^F$ on $[0,T]$. Since the true parameter $M$ is unknown, we work with the \emph{observable} deviation
\begin{equation}
  \label{eq:deltaX_obs_def}
  \delta X_t:=X_t^F-\xFbar(t;M_0),
\end{equation}
centered at the known nominal $M_0$. At $M=M_0$, $\delta X_t$ coincides with the ideal $\delta X_t^{F,*}$ of Remark~\ref{rem:observable_deviation}.

To obtain a closed-form estimator we localize the model around $M_0$ by combining the affine approximation of the LQ coefficients with a first-order expansion of the HJ-induced reference in $M$.

\begin{assumption}[Affine-in-parameter approximation]
\label{ass:affine}
Near the nominal $M_0$, the coefficients $f(t,M),d(t,M)$ in~\eqref{eq:fd_def} and the HJ reference $\xFbar(t;M)$ are $C^1$ in $M$ and admit first-order expansions
\begin{align}
  f(t,M)&\approx f_0(t)+(M-M_0)h(t),\label{eq:f_affine}\\
  d(t,M)&\approx d_0(t)+(M-M_0)g(t),\label{eq:d_affine}\\
  \xFbar(t;M)&\approx\xFbar(t;M_0)+(M-M_0)J(t),\label{eq:x_affine}
\end{align}
where
\begin{align}
  h(t)&:=\partial_Mf(t,M)\big|_{M_0},\label{eq:h_def}\\
  g(t)&:=\partial_Md(t,M)\big|_{M_0}
       =-\frac{B_F^2}{R_F}\partial_Mb_t\big|_{M_0},\label{eq:g_def}\\
  J(t)&:=\partial_M\xFbar(t;M_0)\label{eq:J_def}
\end{align}
are the standard local sensitivities. This is consistent with the regular-characteristic regime of Assumption~\ref{ass:regular_char} and the smooth dependence of the Riccati solutions on their coefficients. Note that $J(t)$ is the (projected) Jacobi field of the HJ characteristic flow. After reparameterizing $M\leftarrow M-M_0$, the centered expansions are $f\approx f_0+Mh$, $d\approx d_0+Mg$, and $\xFbar\approx\xFbar(t;M_0)+MJ(t)$.
\end{assumption}

In the rest of this section, $M$ denotes the centered local parameter $M-M_0$.

The dynamics of $X_t^F$ under the true parameter (centered) $M$ are
\begin{multline}
  \label{eq:XF_dyn}
  \ud X_t^F=\bigl[\xFdot(t;M)+f(t,M)(X_t^F-\xFbar(t;M))\\
  +d(t,M)\bigr]\ud t+\sigma_F\ud W_t^F.
\end{multline}
Subtracting $\xFdot(t;M_0)\ud t$ and substituting $X_t^F=\xFbar(t;M_0) +\delta X_t$, then linearizing in $M$ using~\eqref{eq:f_affine}--\eqref{eq:x_affine}, gives, to first order in $M$,
\begin{multline}
  \label{eq:deltaX_affine}
  \ud\delta X_t=\bigl[f_0(t)\delta X_t+d_0(t)\\
  +M\bigl(g(t)+\gamma(t)+h(t)\delta X_t\bigr)\bigr]\ud t
  +\sigma_F\ud W_t^F,
\end{multline}
where the new {reference-path sensitivity kernel} is
\begin{equation}
  \label{eq:gamma_def}
  \gamma(t):=\dot J(t)-f_0(t)J(t).
\end{equation}
Compared with the deviation dynamics~\eqref{eq:red_state} at the true $M$, the observable formulation~\eqref{eq:deltaX_affine} exhibits an additional deterministic drift $M\gamma(t)$ arising from the mismatch between the observed motion and the nominal reference: when the true destination differs from $M_0$, the reference we subtract off ceases to align with the underlying HJ path, and this misalignment is a third source of information about $M$.

Define the nominal drift, the combined additive kernel $\gtil:=g+\gamma$, and the combined score kernel
\begin{equation}
  \label{eq:alpha_beta_def}
  \alpha(t):=f_0(t)\delta X_t+d_0(t),\qquad
  \beta(t):=\gtil(t)+h(t)\delta X_t.
\end{equation}
Applying Girsanov's theorem \cite{karatzas1998brownian,liptser1977statistics} on $[0,T]$ to the observable process $\delta X_t$ yields
\begin{multline}
  \label{eq:loglik}
  \ell(\delta X;M)=\frac{M}{\sigma_F^2}\int_0^T\beta(t)
  \bigl[\ud\delta X_t-\alpha(t)\ud t\bigr]\\
  -\frac{M^2}{2\sigma_F^2}\int_0^T\beta(t)^2\ud t.
\end{multline}
Both $\alpha$ and $\beta$ are functions of the observed path and the known quantities $f_0,d_0,g,\gamma,h$; the likelihood \eqref{eq:loglik} is therefore computable without knowing the true $M$.

\subsection{Maximum Likelihood Estimator}
\label{subsec:MLE}

Define the observed precision
\begin{equation}
  \label{eq:G_obs}
  G_{\mathrm{obs}}:=\int_0^T\beta(t)^2\ud t
  =\int_0^T\bigl[\gtil(t)+h(t)\delta X_t\bigr]^2\ud t.
\end{equation}

\begin{proposition}
\label{prop:MLE}
Under Assumptions~\ref{ass:current}, \ref{ass:regular_char},
and~\ref{ass:affine}, when $G_{\mathrm{obs}}>0$ the MLE is
\begin{equation}
  \label{eq:MLE}
  \Mhat=\frac{\displaystyle\int_0^T\beta(t)
  \bigl[\ud\delta X_t-\alpha(t)\ud t\bigr]}{G_{\mathrm{obs}}}.
\end{equation}
\end{proposition}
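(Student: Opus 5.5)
The plan is to observe that, under Assumption~\ref{ass:affine}, the log-likelihood \eqref{eq:loglik} is an exact quadratic polynomial in the (centered) parameter $M$. Both stochastic integrals in \eqref{eq:loglik} are then fixed random scalars determined by the observed path, so maximizing amounts to finding the vertex of a concave parabola.

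First we justify that these quantities are well defined and do not depend on $M$. Under Assumptions~\ref{ass:current} and~\ref{ass:regular_char}, the kernels $f_0,d_0,g,h,J$ are continuous on $[0,T]$. For $f_0,d_0,g,h$ this follows from Proposition~\ref{thm:follower_wps} together with smooth dependence of the Riccati solutions on $A_F$ and $\xFbar$. The kernel $J$ is $C^1$ in $t$, so $\gamma=\dot J-f_0J$ is continuous. Since $\delta X_t$ has continuous paths, $\beta$ and $\alpha$ are continuous adapted processes. Hence
\[
S:=\int_0^T\beta(t)\bigl[\ud\delta X_t-\alpha(t)\ud t\bigr]
\]
is a well-defined Itô integral, and $G_{\mathrm{obs}}=\int_0^T\beta^2\ud t$ is finite almost surely. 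Neither quantity involves $M$, because $\alpha$ and $\beta$ are built from the observed path and the nominal kernels only; this is the point noted just after \eqref{eq:loglik}.

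Next we write $\ell(\delta X;M)=\sigma_F^{-2}\bigl(MS-\tfrac12 M^2G_{\mathrm{obs}}\bigr)$. Differentiating in $M$ gives the score $\sigma_F^{-2}(S-MG_{\mathrm{obs}})$ and second derivative $-G_{\mathrm{obs}}/\sigma_F^2$. When $G_{\mathrm{obs}}>0$, $\ell$ is strictly concave in $M$ and tends to $-\infty$ as $|M|\to\infty$. Therefore the unique maximizer is the root of the score, $\Mhat=S/G_{\mathrm{obs}}$, which is \eqref{eq:MLE}.

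The only delicate point is interpreting the Itô integral pathwise, since the MLE must be a functional of the observed trajectory. I would handle this by noting that the integrand $\beta$ is continuous and adapted, so $S$ is the limit in probability of left-point Riemann sums computed from the observed path. This makes $\Mhat$ a measurable function of the observation. Alternatively, one can use integration by parts to rewrite the term $\int h\,\delta X\,\ud\delta X$ via Itô's formula as $\tfrac12(h\,\delta X^2)|_0^T-\tfrac12\int\dot h\,\delta X^2\ud t-\tfrac{\sigma_F^2}{2}\int h\ud t$, which requires $h\in C^1$. Apart from this, the argument is routine.
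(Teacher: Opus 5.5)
Your proposal is correct and takes the same approach as the paper, whose proof just sets the score $\partial\ell/\partial M$ of the quadratic log-likelihood \eqref{eq:loglik} to zero. You add two checks the paper leaves implicit: strict concavity from $G_{\mathrm{obs}}>0$, so the stationary point is the unique maximizer, and well-definedness and measurability of the stochastic integral in the numerator.
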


\begin{proof}
The score equation $\partial\ell/\partial M=0$ applied to \eqref{eq:loglik} gives \eqref{eq:MLE}. 
\end{proof}

\begin{proposition}
\label{prop:MLE_error}
Within~\eqref{eq:deltaX_affine},
\begin{equation}
  \label{eq:Mhat_error}
  \Mhat-M=\frac{\sigma_F\int_0^T\beta(t)\ud W_t^F}{G_{\mathrm{obs}}}.
\end{equation}
The associated observed-information variance scale for the realized path is $\sigma_F^2/G_{\mathrm{obs}}$.
\end{proposition}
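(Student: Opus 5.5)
Within the affine model~\eqref{eq:deltaX_affine}, where $M$ is the centered true parameter, the observed increment satisfies
\[
\ud\delta X_t-\alpha(t)\ud t = M\beta(t)\ud t+\sigma_F\ud W_t^F,
\]
because by~\eqref{eq:alpha_beta_def} the drift of~\eqref{eq:deltaX_affine} is exactly $\alpha(t)+M\beta(t)$. The plan is to substitute this identity into the numerator of the closed-form MLE~\eqref{eq:MLE} from Proposition~\ref{prop:MLE}. The numerator becomes
\[
\int_0^T\beta(t)\bigl[\ud\delta X_t-\alpha(t)\ud t\bigr]
= M\int_0^T\beta(t)^2\ud t+\sigma_F\int_0^T\beta(t)\ud W_t^F .
\]
The first term is $M\,G_{\mathrm{obs}}$ by~\eqref{eq:G_obs}. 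Dividing by $G_{\mathrm{obs}}>0$ and subtracting $M$ then gives~\eqref{eq:Mhat_error} directly.

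The only technical point is making sense of the stochastic integral $\int_0^T\beta\,\ud W^F$. Here $\beta(t)=\gtil(t)+h(t)\delta X_t$ is adapted to the filtration of $W^F$, since $\delta X_t$ solves a linear SDE driven by $W^F$. The kernels $\gtil,h$ are deterministic and bounded on $[0,T]$: Assumption~\ref{ass:regular_char} gives $J\in C^1$, and Proposition~\ref{thm:follower_wps} gives $a,b$ and their $M$-derivatives continuous. The linear SDE~\eqref{eq:deltaX_affine} has bounded coefficients, so $\E\sup_t|\delta X_t|^2<\infty$. Hence $\E\int_0^T\beta^2\ud t<\infty$, and the It\^o integral is well defined; in fact $\int_0^T\beta^2\ud t<\infty$ a.s.\ already suffices. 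The same integrability justifies the decomposition of the numerator above, which is just linearity of the stochastic integral with respect to the semimartingale $\delta X$.

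For the variance statement, I would condition on nothing and argue heuristically, as the phrase ``observed-information'' suggests. The observed information is $-\partial_M^2\ell=G_{\mathrm{obs}}/\sigma_F^2$ from~\eqref{eq:loglik}, and its inverse is $\sigma_F^2/G_{\mathrm{obs}}$. For consistency, the quadratic variation of the martingale numerator $\sigma_F\int_0^t\beta\,\ud W^F$ at time $T$ equals $\sigma_F^2 G_{\mathrm{obs}}$, so the ratio~\eqref{eq:Mhat_error} has ``variance scale'' $\sigma_F^2G_{\mathrm{obs}}/G_{\mathrm{obs}}^2=\sigma_F^2/G_{\mathrm{obs}}$.

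The main obstacle is that $\beta$ and $G_{\mathrm{obs}}$ are random and correlated with $W^F$. The error is therefore not exactly Gaussian, and $\sigma_F^2/G_{\mathrm{obs}}$ is not literally its conditional variance. I would state the result as a scale, as the proposition does, rather than as an exact variance. The exact-variance case is $h\equiv0$, where $\beta=\gtil$ is deterministic and the error is exactly $\mathcal N(0,\sigma_F^2/G_{\mathrm{obs}})$, which makes a useful sanity check.
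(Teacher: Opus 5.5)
Your proposal is correct and follows the paper's proof: substitute the drift $\alpha+M\beta$ of \eqref{eq:deltaX_affine} into the numerator of \eqref{eq:MLE}, cancel the $MG_{\mathrm{obs}}$ term, and read off the variance scale from the quadratic variation $\sigma_F^2G_{\mathrm{obs}}$ of the remaining martingale. Your integrability check and your caveat are sound refinements of the paper's one-line argument; the caveat is that, because $\beta$ depends on the path, $\sigma_F^2/G_{\mathrm{obs}}$ is only a scale and not an exact conditional variance, except when $h\equiv0$.
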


\begin{proof}
Substituting the SDE drift into the numerator of \eqref{eq:MLE} cancels the nominal drift against $\alpha\ud t$ and the $M\beta$ contribution against $MG_{\mathrm{obs}}$, leaving the stochastic integral in \eqref{eq:Mhat_error}. Its quadratic variation is $\sigma_F^2G_{\mathrm{obs}}$ by It\^o isometry.
\end{proof}

\subsection{Sparse Multi-Period Estimator}
\label{subsec:sparse}

Over $N$ independent observation periods of length $T$, period $i$ produces an MLE $\Mhat^{(i)}$ with observed precision $G_{\mathrm{obs}}^{(i)}$. Define
\begin{equation}
  \label{eq:PN}
  P_N:=\sum_{i=1}^NG_{\mathrm{obs}}^{(i)}.
\end{equation}

\begin{proposition}
\label{prop:sparse}
The estimator
\[
  \Mbar_N:=P_N^{-1}\sum_{i=1}^NG_{\mathrm{obs}}^{(i)}\Mhat^{(i)}
\]
admits the $O(1)$-storage recursion
\begin{align}
  P_{N+1}&=P_N+G_{\mathrm{obs}}^{(N+1)},\label{eq:update_P}\\
  \Mbar_{N+1}&=P_{N+1}^{-1}\bigl(P_N\Mbar_N
  +G_{\mathrm{obs}}^{(N+1)}\Mhat^{(N+1)}\bigr),\label{eq:update_M}
\end{align}
with observed-information variance scale $\sigma_F^2/P_N$.
\end{proposition}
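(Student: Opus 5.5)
The plan is to verify the recursion algebraically first, and then obtain the variance scale by pooling the per-period error representations of Proposition~\ref{prop:MLE_error}.

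\emph{The recursion.} Define the unnormalized accumulator $S_N:=\sum_{i=1}^N G_{\mathrm{obs}}^{(i)}\Mhat^{(i)}$, so that $\Mbar_N=P_N^{-1}S_N$ whenever $P_N>0$. The precision update \eqref{eq:update_P} is immediate from the definition \eqref{eq:PN}. For the estimate, write
\[
S_{N+1}=S_N+G_{\mathrm{obs}}^{(N+1)}\Mhat^{(N+1)}
\]
and substitute $S_N=P_N\Mbar_N$. Dividing by $P_{N+1}$ then gives \eqref{eq:update_M}. Only the pair $(P_N,\Mbar_N)$ is carried forward between periods, which is exactly the $O(1)$ storage claim. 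The positivity needed for the divisions holds as soon as some period satisfies $G_{\mathrm{obs}}^{(i)}>0$, which is the same condition already required in Proposition~\ref{prop:MLE}.

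\emph{The variance scale.} Apply Proposition~\ref{prop:MLE_error} in each period: $G_{\mathrm{obs}}^{(i)}(\Mhat^{(i)}-M)=\sigma_F\int_0^T\beta^{(i)}\ud W^{(i)}$. Summing over $i$ and dividing by $P_N$ gives
\[
\Mbar_N-M=\frac{\sigma_F\sum_{i=1}^N\int_0^T\beta^{(i)}(t)\ud W^{(i)}_t}{P_N}.
\]
Concatenate the $N$ independent periods into a single Brownian motion on $[0,NT]$, with $\beta$ defined piecewise. The numerator then becomes one stochastic integral whose quadratic variation is $\sigma_F^2\sum_{i}G_{\mathrm{obs}}^{(i)}=\sigma_F^2P_N$. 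This yields the observed-information variance scale $\sigma_F^2/P_N$, in the same sense as in Proposition~\ref{prop:MLE_error}.

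As a consistency check, the concatenated log-likelihood is the sum of the per-period likelihoods \eqref{eq:loglik}. Its maximizer is the pooled ratio $\sum_i\int\beta^{(i)}[\ud\delta X^{(i)}-\alpha^{(i)}\ud t]\big/P_N$, and this equals $\Mbar_N$ by \eqref{eq:MLE}. So $\Mbar_N$ is the full-data MLE and loses nothing relative to it.

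The only delicate point is interpretation rather than computation. The weights $G_{\mathrm{obs}}^{(i)}$ are random and path-dependent, so $\sigma_F^2/P_N$ should not be claimed as an exact variance. It is a quadratic-variation (observed-information) scale, exactly as in the single-period proposition, and I would state it that way.
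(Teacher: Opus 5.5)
Your proof is correct and takes the paper's route. The recursion comes from expanding $P_{N+1}\Mbar_{N+1}=P_N\Mbar_N+G_{\mathrm{obs}}^{(N+1)}\Mhat^{(N+1)}$. The scale $\sigma_F^2/P_N$ is the precision-weighted combination of the per-period scales $\sigma_F^2/G_{\mathrm{obs}}^{(i)}$, which your pooled stochastic-integral and quadratic-variation computation makes explicit. Your extra observation that $\Mbar_N$ equals the maximizer of the summed log-likelihood goes beyond what the paper's one-line proof states.
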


\begin{proof}
Per-period observed-information variance scales $\sigma_F^2/G_{\mathrm{obs}}^{(i)}$ combine under precision weighting to $\sigma_F^2/P_N$; the recursion follows from expanding $P_{N+1}\Mbar_{N+1}$.
\end{proof}

\begin{proposition}
\label{prop:asymptotic}
Under Assumptions~\ref{ass:current}, \ref{ass:regular_char},
and~\ref{ass:affine}, as $N\to\infty$,
\[
  \Mbar_N\to M \;\text{a.s.},\quad
  \sqrt N(\Mbar_N-M)\xrightarrow{d}\mathcal N(0,\FI(M)^{-1}),
\]
where $\FI(M)=\E_M[G_{\mathrm{obs}}]/\sigma_F^2$ is the Fisher information \eqref{eq:FIM}. The estimator attains the Cram\'er-Rao benchmark asymptotically.
\end{proposition}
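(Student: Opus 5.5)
The plan is to work within the affine model \eqref{eq:deltaX_affine}, where the $N$ periods give i.i.d.\ copies of the pair $(G_{\mathrm{obs}}^{(i)},S^{(i)})$. Here
\[
S^{(i)}:=\sigma_F\int_0^T\beta^{(i)}(t)\ud W_t^{F,(i)}.
\]
By Proposition~\ref{prop:MLE_error}, $G_{\mathrm{obs}}^{(i)}(\Mhat^{(i)}-M)=S^{(i)}$. Substituting this into the definition of $\Mbar_N$ in Proposition~\ref{prop:sparse} gives the exact ratio representation
\[
\Mbar_N-M=\frac{\frac1N\sum_{i=1}^N S^{(i)}}{\frac1N\sum_{i=1}^N G_{\mathrm{obs}}^{(i)}}.
\]
This reduces the whole statement to an i.i.d.\ law of large numbers and central limit theorem for the numerator and denominator, plus Slutsky's lemma. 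No analysis of the individual $\Mhat^{(i)}$ is needed. This matters because the individual estimates need not have finite moments when $G_{\mathrm{obs}}^{(i)}$ can be small.

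The first step is to verify integrability. Under Assumptions~\ref{ass:current} and~\ref{ass:regular_char}, the kernels $f_0,d_0,g,\gamma,h$ are bounded on $[0,T]$. The coefficients $a_t,b_t$ are bounded by Proposition~\ref{thm:follower_wps}, and $J,\dot J$ are continuous. Hence \eqref{eq:deltaX_affine} is a linear SDE with bounded coefficients, so $\E_M[\sup_t|\delta X_t|^p]<\infty$ for every $p$ (Gaussian process). Consequently $\E_M[G_{\mathrm{obs}}]\le 2\int_0^T(\gtil^2+h^2\E\delta X_t^2)\ud t<\infty$. Likewise, $S$ is a genuine martingale at time $T$ with $\E_M[S]=0$ and $\E_M[S^2]=\sigma_F^2\E_M[G_{\mathrm{obs}}]$ by It\^o isometry.

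The second step is the limit argument. The strong LLN gives $\frac1N\sum G_{\mathrm{obs}}^{(i)}\to\E_M[G_{\mathrm{obs}}]=\sigma_F^2\FI(M)$ a.s. It also gives $\frac1N\sum S^{(i)}\to0$ a.s. Provided $\FI(M)>0$, the ratio converges to zero almost surely, which proves consistency. I will state $\FI(M)>0$ as the standing nondegeneracy condition; it holds whenever $\gtil\not\equiv0$ or $h\not\equiv0$, because $\delta X_t$ has positive variance for $t>0$. For the CLT, $\frac1{\sqrt N}\sum S^{(i)}\xrightarrow{d}\mathcal N(0,\sigma_F^2\E_M[G_{\mathrm{obs}}])$. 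Dividing by the a.s.\ limit $\E_M[G_{\mathrm{obs}}]$ via Slutsky gives
\[
\sqrt N(\Mbar_N-M)\xrightarrow{d}\mathcal N\!\left(0,\frac{\sigma_F^2}{\E_M[G_{\mathrm{obs}}]}\right)=\mathcal N(0,\FI(M)^{-1}).
\]

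The final step is to identify $\FI(M)$ with the Fisher information of \eqref{eq:loglik}. The score is $\partial_M\ell=\sigma_F^{-2}\bigl(\int\beta[\ud\delta X-\alpha\ud t]-MG_{\mathrm{obs}}\bigr)=\sigma_F^{-1}S/\sigma_F$. Its variance is $\E_M[G_{\mathrm{obs}}]/\sigma_F^2$, which equals $-\E_M[\partial_M^2\ell]$. Therefore the asymptotic variance matches the per-period Cram\'er--Rao bound, and with $N$ periods the total information is $N\FI(M)$.

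The main obstacle is the integrability and nondegeneracy bookkeeping, namely justifying $\E_M[G_{\mathrm{obs}}]\in(0,\infty)$ and that the stochastic integral is a true martingale. I would handle it through Gaussian moment bounds on the linear SDE, using boundedness of the kernels inherited from the regular-characteristic assumption.
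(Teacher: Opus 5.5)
Your proposal is correct and follows the paper's proof. Both arguments use the error representation \eqref{eq:Mhat_error} to write $\Mbar_N-M=\sum_i S^{(i)}/P_N$, apply the strong law to $P_N/N$, and apply a central limit theorem to the precision-weighted score sum. Where the paper cites the martingale CLT, you use the i.i.d.\ CLT with Slutsky, which suffices because the periods are independent replications; you also make explicit the moment bounds and the nondegeneracy condition $\FI(M)>0$ that the paper leaves implicit.
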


\begin{proof}
The error representation \eqref{eq:Mhat_error} gives independent zero-mean per-period scores. The strong law gives $P_N/N\to\E_M[G_{\mathrm{obs}}]$ a.s., and the martingale central limit theorem applied to the precision-weighted sum yields the Gaussian limit with variance $\FI(M)^{-1}$.
\end{proof}

Within the centered affine model~\eqref{eq:deltaX_affine}, the multi-period framework is a repeated-experiment setting: each period is an independent replication of the encounter under the same configuration (initial condition, local model, and observation window), e.g., repeated transits by vehicles of the same class in the same candidate-destination scenario. A period is not a segmentation of a single trajectory, which would violate independence across periods since the deviation process does not reset.

\section{Fisher Information Geometry}
\label{sec:identifiability}

Throughout this section $M$ denotes the centered parameter of Assumption~\ref{ass:affine}. For the affine model~\eqref{eq:deltaX_affine}, the Fisher information at centered parameter $M$ is
\begin{equation}
  \label{eq:FIM}
  \FI(M)=\frac{1}{\sigma_F^2}\int_0^T
  \E_M\!\left[\bigl(\gtil(t)+h(t)\delta X_t\bigr)^2\right]\ud t.
\end{equation}
Let
$
  m_M(t):=\E_M[\delta X_t],\;
  V_M(t):=\E_M[(\delta X_t)^2].
$ 
Then
\begin{align}
  \dot m_M(t)&=(f_0(t)+Mh(t))m_M(t)\notag\\
  &\quad+d_0(t)+M\gtil(t),\quad m_M(0)=0,\label{eq:mean_ode}\\
  \dot V_M(t)&=2(f_0(t)+Mh(t))V_M(t)\notag\\
  &\quad+2\bigl(d_0(t)+M\gtil(t)\bigr)m_M(t)+\sigma_F^2,\notag\\
  &\hspace{9.5em} V_M(0)=0.\label{eq:variance_ode}
\end{align}
Consequently,
\begin{equation}
  \label{eq:FIM_expand}
  \FI(M)=\frac{1}{\sigma_F^2}\int_0^T
  \bigl(\gtil^2+2\gtil h\,m_M+h^2V_M\bigr)\ud t.
\end{equation}

\subsection{Jacobi-field decomposition of the kernels}
\label{subsec:geometric_h}
The three terms correspond to the combined additive kernel (policy-mean plus reference-path), the cross-channel contribution, and the multiplicative drift channel. Although the cross term may be negative pointwise, the full integrand is nonnegative because it equals $\E[(\gtil(t)+h(t)\delta X_t)^2]$.

All three kernels $g,\gamma,h$ decompose through the Jacobi field of the HJ characteristic flow: the two-dimensional field is $\Jvec(t):=\partial_M\xFvec(t;M_0)$, and its projection $J(t):=\partial_M\xFbar(t;M_0)=e^\top\Jvec(t)$ is the scalar sensitivity of Assumption~\ref{ass:affine}. The reference-path kernel is directly Jacobi-driven, $\gamma(t)=\dot J(t)-f_0(t)J(t)$, while $g$ and $h$ arise through the LQ sensitivity equations below. Recall that after projection onto the observation direction $e$, the scalar drift coefficient is $A_F(t,M)=e^\top\nabla\vc\bigl(\xFvec(t;M)\bigr)e$ by \eqref{eq:AF}, so
\begin{equation}
  \label{eq:dM_AF}
  \partial_MA_F(t,M_0)
    =e^\top\bigl[D^2\vc\bigl(\xFvec(t;M_0)\bigr)[\Jvec(t)]\bigr]e,
\end{equation}
where $D^2\vc[\Jvec]$ denotes the tensor contraction of the Hessian $D^2\vc$ with the vector $\Jvec$. For notational brevity we write $\nabla^2\vc\cdot\Jvec$ for this projected contraction in the sequel. The shear kernel is
\begin{equation}
  \label{eq:h_explicit}
  h(t)=\nabla^2\vc(\xFvec(t;M_0))\cdot\Jvec(t)
  -\frac{2B_F^2}{R_F}a_M(t),
\end{equation}
where $a_M(t):=\partial_Ma_t|_{M_0}$ solves
\begin{multline}
  \label{eq:dM_a_ode}
  \dot a_M(t)=\left(\frac{4B_F^2}{R_F}a_t-2A_F\right)a_M(t)\\
  -2a_t\,\partial_MA_F(t,M_0),\qquad a_M(T)=0.
\end{multline}
The policy-mean kernel is
\begin{equation}
  \label{eq:g_kernel_explicit}
  g(t)=-\frac{B_F^2}{R_F}b_M(t),
\end{equation}
where $b_M(t):=\partial_Mb_t|_{M_0}$ satisfies
\begin{multline}
  \label{eq:dM_b_ode}
  \dot b_M(t)=\left(\frac{2B_F^2}{R_F}a_t-A_F\right)b_M(t)
  +\frac{2B_F^2}{R_F}b_ta_M(t)\\
  -b_t\,\partial_MA_F(t,M_0)-Q_FJ(t),\qquad b_M(T)=0.
\end{multline}
Equations \eqref{eq:dM_a_ode}--\eqref{eq:dM_b_ode} show how the HJ Jacobi field propagates into the LQ-induced kernels $g$ and $h$, while the reference-path kernel $\gamma=\dot J-f_0 J$ is directly determined by the Jacobi field. In particular, if $J\equiv0$, then $\partial_MA_F\equiv0$; the linearized Riccati equation for $a_M$ is homogeneous with zero terminal condition, so $a_M\equiv0$. The equation for $b_M$ is then also homogeneous, since its forcing $-Q_FJ$ vanishes, and hence $b_M\equiv0$. The reference-path kernel $\gamma=\dot J-f_0 J$ also vanishes trivially when $J\equiv0$. Thus all three kernels $g,\gamma,h$ vanish when $J\equiv0$ on the observation interval $[0,T]$, not merely at an isolated time.

\begin{remark}
\label{rem:reduction}
When $\vc(x)=Ax+b$ is linear, $A_F$ is constant in $t$ and $M$, so $\partial_MA_F\equiv0$. The linearized Riccati equation~\eqref{eq:dM_a_ode} is then homogeneous with $a_M(T)=0$, giving $a_M\equiv0$ and hence, by~\eqref{eq:h_explicit}, $h\equiv0$: the multiplicative channel is activated precisely by current curvature. Identifiability in linear currents rests entirely on the additive channels, which remain active through the Jacobi field $J$, directly in $\gamma=\dot J-f_0J$ and through the forcing $-Q_FJ$ in~\eqref{eq:dM_b_ode} for $g$.
\end{remark}

\subsection{Identifiability}
\label{subsec:identifiability}

\begin{theorem}
\label{thm:identifiability}
Assume Assumptions~\ref{ass:current}, \ref{ass:regular_char}, and~\ref{ass:affine}, and that $\sigma_F>0$, and recall $\gtil=g+\gamma$ and $\beta=\gtil+h\,\delta X_t$ from~\eqref{eq:alpha_beta_def}. Then $M_0$ is locally identifiable in the affine Gaussian model if and only if
\[
    \tilde g\not\equiv0\quad\text{or}\quad h\not\equiv0
    \quad\text{on }[0,T].
\]
Equivalently,
\[
  \FI(M_0)=\frac{1}{\sigma_F^2}\int_0^T\E[\beta(t)^2]\ud t>0.
\]
If $\tilde g\equiv0$ and $h\equiv0$, then $\FI(M_0)=0$.
\end{theorem}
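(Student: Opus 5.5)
The plan is to interpret local identifiability in the affine Gaussian model as positivity of the Fisher information at the nominal point, $\FI(M_0)>0$. In the centered parametrization this is $M=0$. Once this reading is fixed, the theorem reduces to a statement about the integrand $\E[\beta(t)^2]$ in \eqref{eq:FIM}.

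\textbf{Sufficiency direction.} Suppose $\gtil\equiv0$ and $h\equiv0$. Then $\beta(t)\equiv0$ pathwise. The score vanishes identically, since $\partial_M\ell$ from \eqref{eq:loglik} is linear in $\beta$. Hence $\FI(M_0)=0$, and indeed the likelihood \eqref{eq:loglik} does not depend on $M$ at all. The laws of $\delta X$ under nearby parameters therefore coincide to first order, and $M_0$ is not locally identifiable. This also gives the final sentence of the statement.

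\textbf{Necessity direction.} Suppose $\gtil\not\equiv0$ or $h\not\equiv0$. I would show that $\E_0[\beta(t)^2]>0$ on a set of positive measure.

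Under $M=0$, \eqref{eq:deltaX_affine} is a linear SDE with deterministic coefficients and additive noise $\sigma_F\,\ud W^F_t$. Hence $\delta X_t$ is Gaussian with mean $m_0(t)$ and variance $\Sigma(t)=V_0(t)-m_0(t)^2$. By the variance ODE (equivalently, $\dot\Sigma=2f_0\Sigma+\sigma_F^2$ with $\Sigma(0)=0$),
\[
\Sigma(t)=\sigma_F^2\int_0^t e^{2\int_s^t f_0}\,\ud s>0 \quad\text{for all } t>0,
\]
which uses $\sigma_F>0$. Then the bias–variance decomposition gives
\[
\E[\beta(t)^2]=\bigl(\gtil(t)+h(t)m_0(t)\bigr)^2+h(t)^2\Sigma(t).
\]

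Two cases follow.
\begin{itemize}
\item If $h(t_0)\neq0$ for some $t_0>0$, continuity of $h$ (from the $C^1$ assumptions, Assumption~\ref{ass:regular_char} and Assumption~\ref{ass:affine}) gives an interval where $h^2\Sigma>0$.
\item If $h\equiv0$, then $\gtil\not\equiv0$, and the integrand equals $\gtil^2$, which is positive on an interval by continuity of $\gtil$.
\end{itemize}
Continuity of $\gtil=g+\gamma$ needs $\dot J$ continuous, which holds since $\Jvec$ is $C^1$. Integrating over that interval yields $\FI(M_0)>0$.

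It remains to connect $\FI(M_0)>0$ to local identifiability. Since the model is a Gaussian exponential family in $M$ with log-likelihood quadratic in $M$, distinct nearby $M$ give mutually distinguishable laws: the KL divergence between the parameters $0$ and $M$ equals $\tfrac{M^2}{2}\FI(0)$, to leading order in $M$ (exactly so for the $h$-free part). This divergence is positive for $M\neq0$.

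\textbf{Main obstacle.} The delicate step is handling the "$\not\equiv0$" condition, which is only almost-everywhere or pointwise nonvanishing. It requires the regularity of $\gtil$ and $h$ (or a measure-theoretic version: a nonzero $L^2$ function integrates to a positive $L^2$ norm) together with strict positivity of $\Sigma(t)$ for $t>0$. I would phrase the argument in $L^2$ terms to avoid needing continuity. Concretely, if $\|h\|_{L^2}>0$ then $\int h^2\Sigma\,\ud t>0$ because $\Sigma>0$ a.e.; otherwise $h=0$ a.e. and $\int\gtil^2\,\ud t>0$.
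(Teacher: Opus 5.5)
Your proposal is correct and follows the paper's proof essentially step for step: the trivial direction via $\beta\equiv0$, the decomposition $\E[\beta(t)^2]=(\gtil+hm)^2+h^2\Var(\delta X_t)$ with $\Var(\delta X_t)>0$ for $t>0$ because $\sigma_F>0$, and the same case split into $h(t)\neq0$ versus $h(t)=0,\ \gtil(t)\neq0$. Your additions are refinements rather than a different route: the explicit formula for $\Sigma(t)$, the continuity or $L^2$ argument that the relevant set has positive measure (which the paper simply asserts), and the KL computation linking $\FI(M_0)>0$ to distinguishability; note only that your labels ``sufficiency'' and ``necessity'' are swapped.
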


\begin{proof}
If $\tilde g\equiv0$ and $h\equiv0$, then $\beta\equiv0$, so $\FI(M_0)=0$. Conversely, suppose $\tilde g\not\equiv0$ or $h\not\equiv0$. Define
\[
  S:=\{t\in(0,T]:|\tilde g(t)|+|h(t)|>0\};
\]
$S$ has positive Lebesgue measure by hypothesis. Writing $m(t):=m_{M_0}(t)$ and $V(t):=V_{M_0}(t)$,
\begin{multline*}
  \E[(\gtil(t)+h(t)\delta X_t)^2]\\
  =(\gtil(t)+h(t)m(t))^2+h(t)^2\bigl(V(t)-m(t)^2\bigr).
\end{multline*}
For $t>0$, $V(t)-m(t)^2=\Var(\delta X_t)>0$ because $\sigma_F>0$. On $S$, either $h(t)\neq0$ (and the second term is strictly positive), or $h(t)=0$ and $\tilde g(t)\neq0$ (and the first term equals $\tilde g(t)^2>0$). Hence the integrand is positive on $S$, giving $\FI(M_0)>0$ and local identifiability.
\end{proof}

The theorem is local: it characterizes distinguishability from infinitesimal perturbations within the affine model centered at $M_0$. Recomputing $\xFbar$, $J$, and the kernels at other nominal values traces out the map $M\mapsto\FI(M)$ plotted in Section~\ref{sec:numerics}.

\begin{remark}
\label{rem:operational}
The kernels $g,\gamma,h$ are determined by the current field, the HJ solution, the Jacobi field, and the LQ sensitivity equations, without observational data: a blue team can evaluate $\FI(M)$ over a candidate destination set {before deployment} to identify where the current geometry supports inference and where it does not.
\end{remark}

\section{Numerical Results}
\label{sec:numerics}
We test the framework in two current fields: a counter-clockwise harbor vortex and an east-flowing channel-shear current. The experiments numerically validate the main predictions of the framework: the combined score kernel predicts the estimator's variance through the Cram\'er-Rao benchmark, and the current geometry determines where inference quality is high or low.

Parameters: $\smax=1.5$ m/s, $\sigma_F=0.15$ m\,s$^{-1/2}$, $Q_F=10^{-3}$, $R_F=\lambda_F=B_F=1.0$, $T=800$ s, and $\Delta t=1$ s. All simulations use the scalar observation direction \(e=(1,0)^\top\). The HJ equation is solved by fast sweeping \cite{zhao2004fast} on a $101\times101$ grid with $\Delta x=20$ m. Monte Carlo estimates use 150--600 sample paths; the HJ reference and the kernels $g,\gamma,h$ are recomputed at each candidate $M$, so the full nonlinear HJ-induced parameter dependence enters through the kernels. We compare HJ-optimal references against a \emph{dead-reckoning} baseline that steers directly toward the destination at maximum speed without optimizing over the current.

\subsection{Vortex current}
\label{subsec:exp_vortex}
The counter-clockwise harbor vortex has peak speed $1.1$ m/s at $r=150$ m, $\mathbf x = (x_1, x_2)$,
\begin{equation}
  \label{eq:vortex}
  \vc(\mathbf x)=\frac{1.1\,r}{150+r}\frac{(-x_2,x_1)^\top}{\|\mathbf x\|_2},
  \quad r=\|\mathbf x\|_2.
\end{equation}
In the numerical implementation, the value at $r=0$ is defined by a small-radius regularization. Three vehicles depart from $(-500,450)$, $(-600,0)$, and $(-500,-450)$ m toward destinations $M\cdot(400,0)$ m with $M\in\{0.8,1.0,1.2\}$.

Figure~\ref{fig:vortex_traj} shows the trajectories: the HJ paths arc with the rotation while dead-reckoning paths point directly at the target, so the HJ paths sample a wider variety of current gradients, which is reflected in their information advantage below.

\begin{figure}[htbp]
  \centering
  \includegraphics[width=\linewidth]{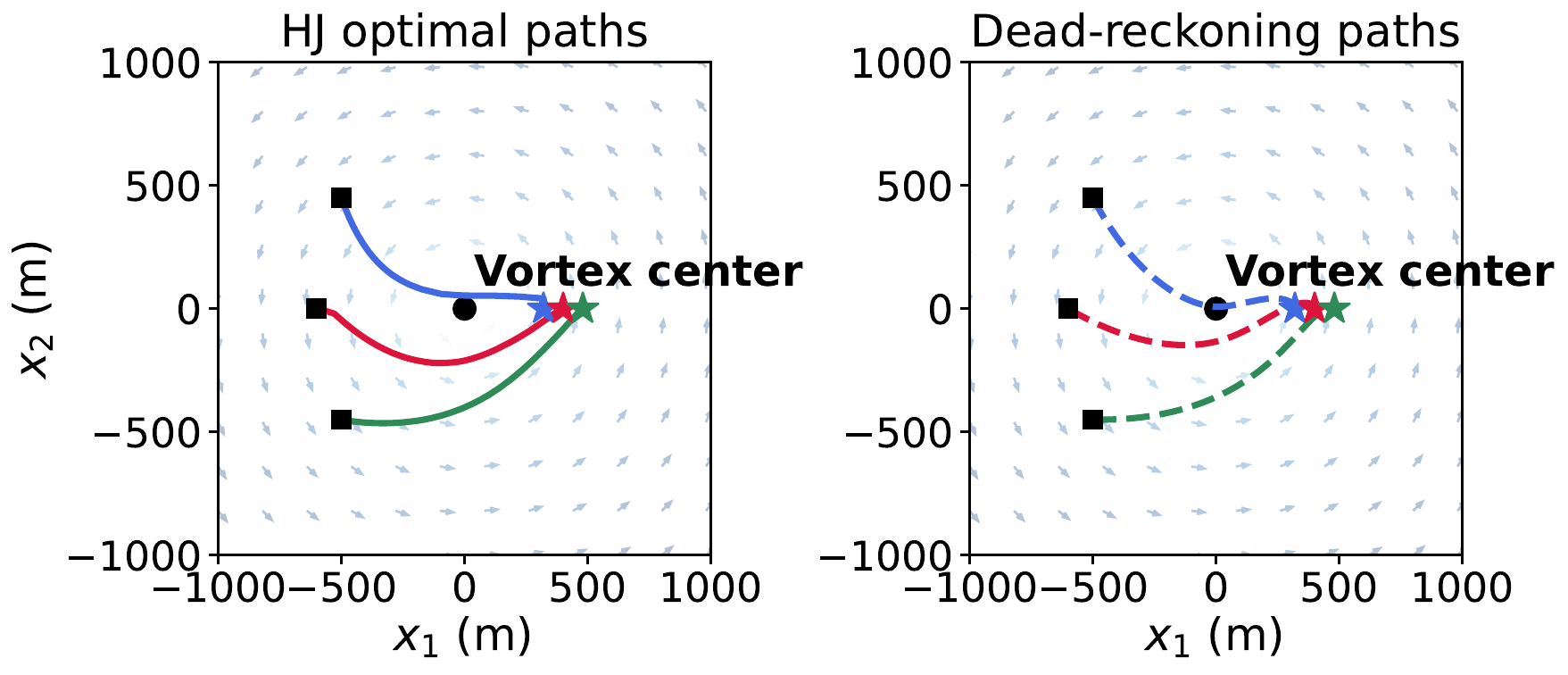}
  \caption{Vortex field \eqref{eq:vortex} with three trajectories for $M\in\{0.8,1.0,1.2\}$ (blue/red/green). Squares: starts; stars: destinations; black dot: vortex center. Left: HJ-optimal; right: dead-reckoning.}
  \label{fig:vortex_traj}
\end{figure}

\paragraph{Cram\'er-Rao validation.}
Figure~\ref{fig:cr_validation} is the central empirical result. Across $M\in[0.5,1.5]$, the simulated single-period MLE variance (markers) tracks the theoretical Cram\'er-Rao bound $\FI(M)^{-1}$ (lines) for both references, validating the three-channel Fisher information formula. The HJ advantage grows with $M$ as the dead-reckoning trajectories enter a region where the combined score kernel is smaller: the empirical variance ratio is approximately unity at $M=0.5$, $1.4\times$ at $M=1.0$, and $3.1\times$ at $M=1.5$.

\begin{figure}[htbp]
  \centering
  \includegraphics[width=0.74\linewidth]{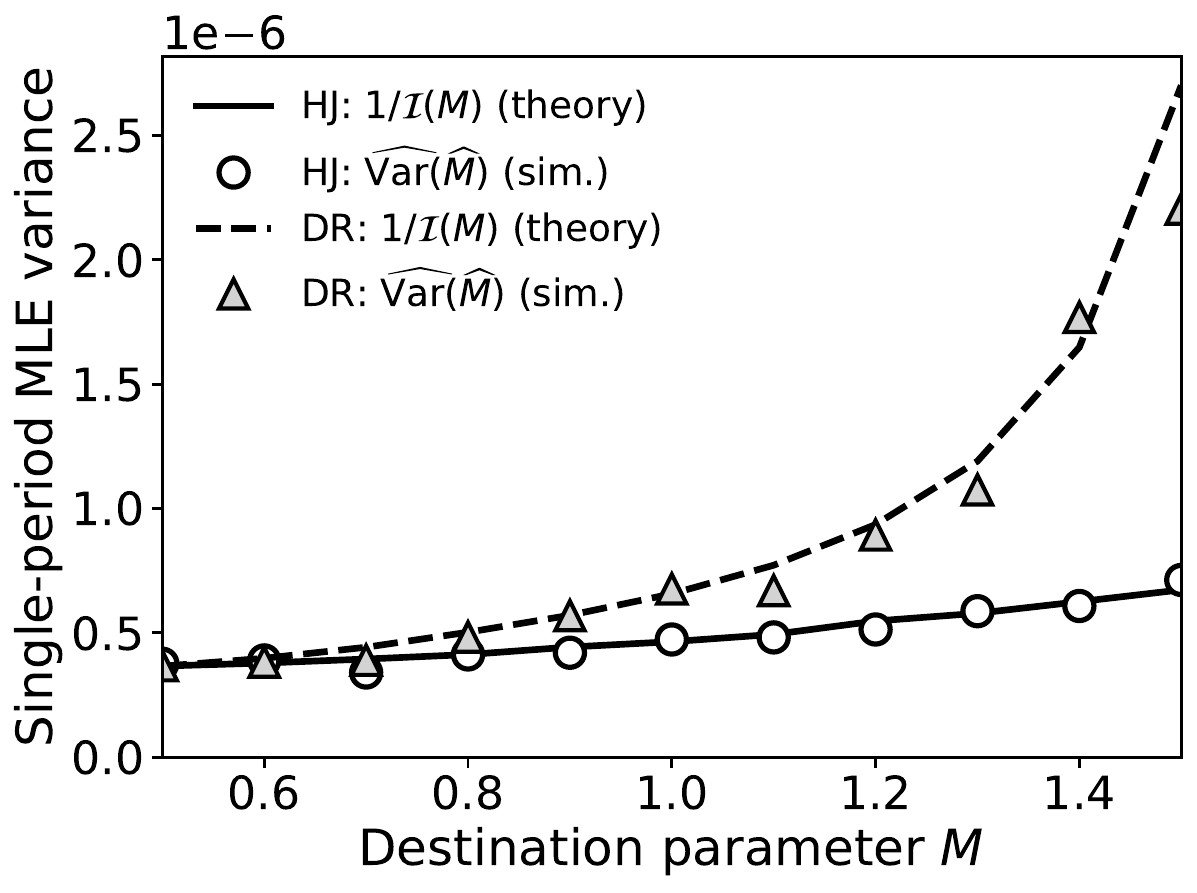}
  \caption{Cram\'er-Rao validation in the vortex field. Empirical single-period MLE variance $\widehat{\mathrm{Var}}(\Mhat)$ (markers) overlays the theoretical Cram\'er-Rao bound $\FI(M)^{-1}$ (lines) for HJ-optimal (solid/circles) and dead-reckoning (dashed/triangles) references.}
  \label{fig:cr_validation}
\end{figure}

\paragraph{Fisher information geometry.}
Figure~\ref{fig:vortex_FI} plots $\FI(M)$ on its own scale. Both curves show an overall decreasing trend in $M$, reflecting a decrease in the effective score-kernel energy accumulated along the observed path over the fixed horizon $T$; HJ lies above dead-reckoning throughout. The mean ratio over the range is approximately $2.0\times$, with the gap widening toward larger $M$ as the dead-reckoning kernel decreases relative to the HJ kernel.

\begin{figure}[htbp]
  \centering
  \includegraphics[width=0.74\linewidth]{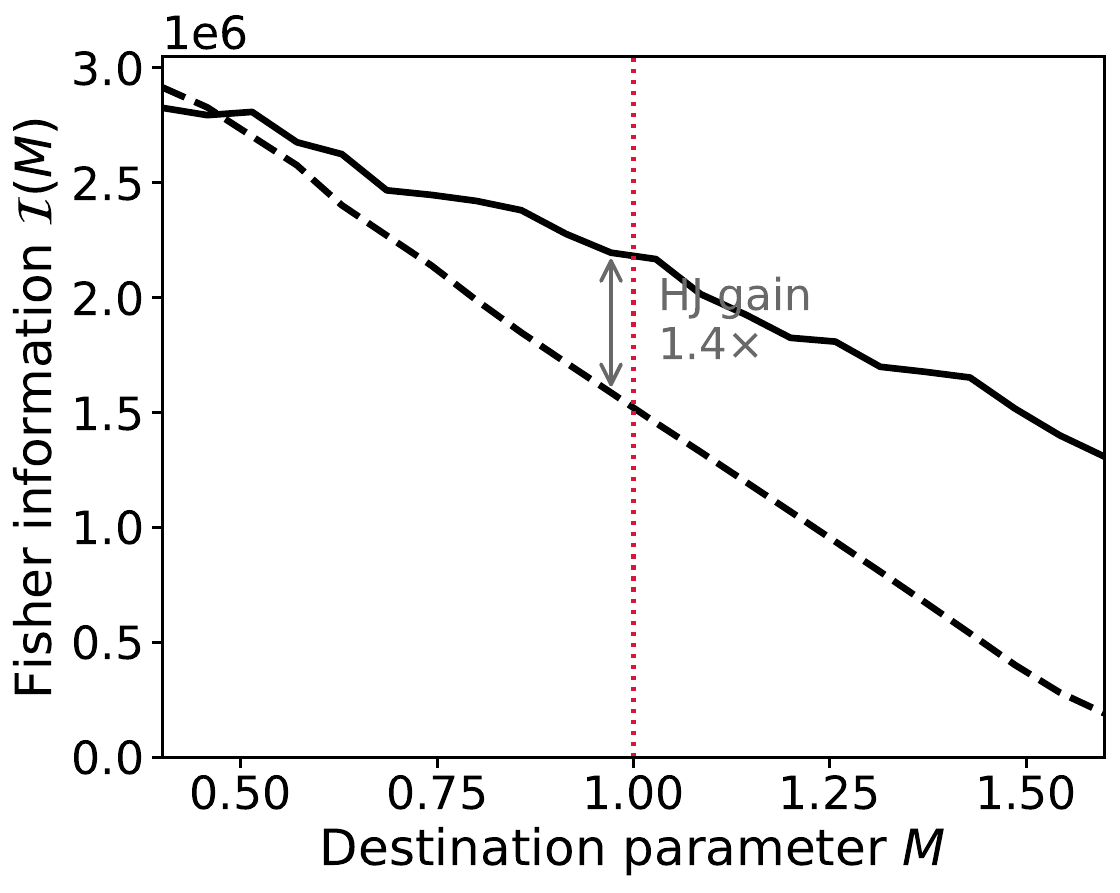}
  \caption{Fisher information $\FI(M)$ in the vortex field. HJ (solid); dead-reckoning (dashed); $M=1.0$ marked.}
  \label{fig:vortex_FI}
\end{figure}

\paragraph{Multi-period sparse estimator.}
Figure~\ref{fig:vortex_sparse} shows sample paths and variance of $\Mbar_N$ over $N=20$ periods. Both estimators converge to $M=1.0$; the variance decay matches the $1/N$ rate predicted by Proposition~\ref{prop:asymptotic}, with HJ maintaining its single-period advantage throughout.

\begin{figure}[htbp]
  \centering
  \includegraphics[width=\linewidth]{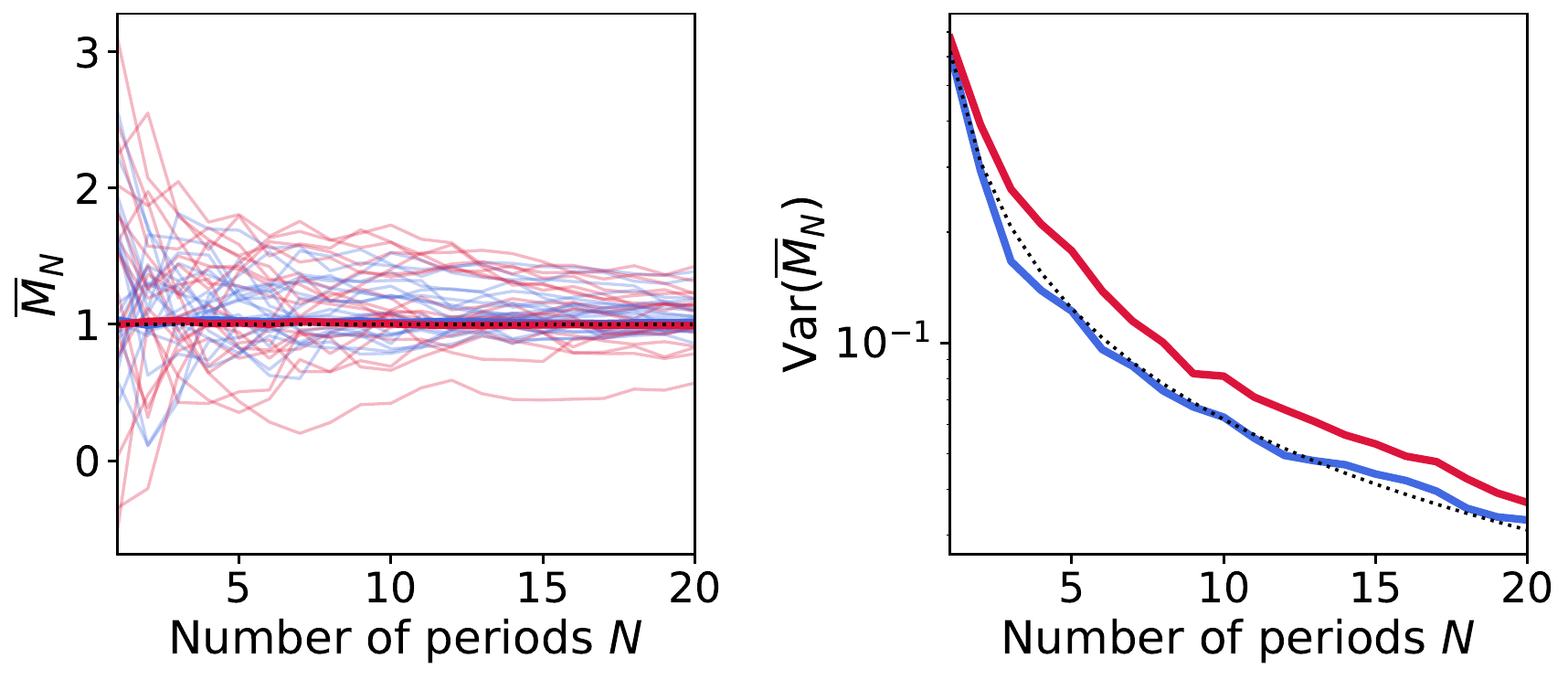}
  \caption{Sparse estimator $\Mbar_N$ at $M=1.0$ over $N=20$ periods. Left: sample paths, HJ (blue) and dead-reckoning (red); thick lines are means; dotted: true value. Right: variance on log scale with $1/N$ reference (dotted).}
  \label{fig:vortex_sparse}
\end{figure}

\subsection{Channel shear current}
\label{subsec:exp_shear}

The east-flowing channel decays with distance from the centerline:
\begin{equation}
  \label{eq:shear_current}
  \vc(x)=\bigl(v_0e^{-x_2^2/L^2},0\bigr),
  \quad v_0=1.2\;\text{m/s},\; L=250\;\text{m}.
\end{equation}
A single vehicle starts at $(-400,600)$ m. For the trajectory illustration in Figure~\ref{fig:shear_traj} we take three off-ray destinations $(400,400)$, $(500,0)$, and $(300,-300)$ m, labeled $M\in\{0.8,1.0,1.2\}$, chosen to sample the shear geometry above, along, and below the centerline. For the continuous information plot in Figure~\ref{fig:FI_comparison} the destination curve is taken as the dilation family along the centerline,
\begin{equation}
  \label{eq:shear_Gamma}
  \Gamma(M) = M\cdot(500,0)\,\text{m},
\end{equation}
so that a scalar $M$ parameterization is well defined over $M\in[0.5,1.6]$ and the Jacobi field $J(t)=\partial_M\xFbar(t;M_0)$ of Section~\ref{sec:identifiability} is well defined at each nominal $M_0$. The same $\Gamma$ is used to compute the HJ reference and the kernels $g,\gamma,h$ throughout the FI scan. Thus Figure~\ref{fig:shear_traj} is a qualitative path-geometry illustration, whereas the quantitative Fisher-information scan in Figure~\ref{fig:FI_comparison} uses the scalar family~\eqref{eq:shear_Gamma}.

Figure~\ref{fig:shear_traj} shows the trajectories. The HJ paths dip toward the centerline to exploit the eastward current, then curve to their destinations; dead-reckoning paths cross the centerline more steeply and spend less time near the high-current, high-curvature region around $x_2=0$.

\begin{figure}[htbp]
  \centering
  \includegraphics[width=\linewidth]{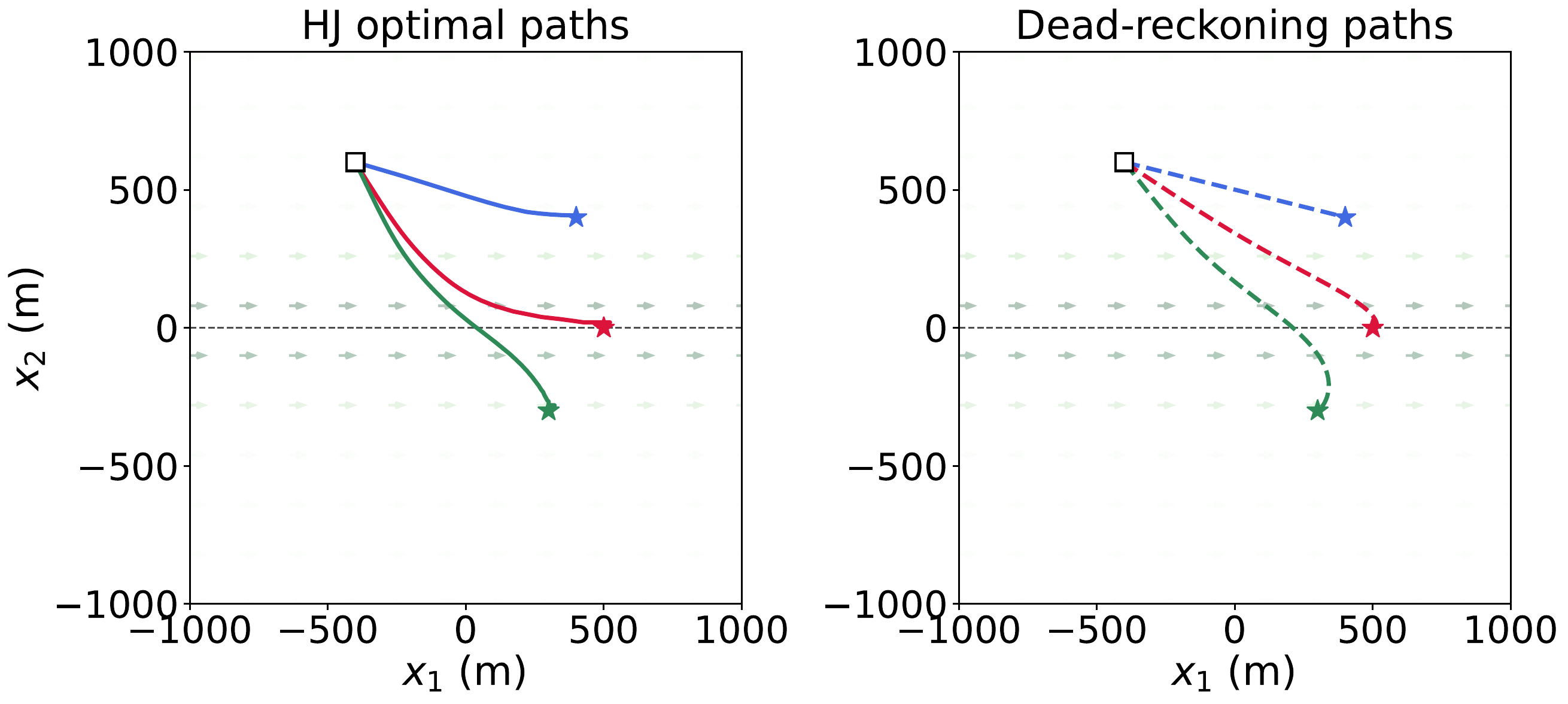}
  \caption{Channel shear field \eqref{eq:shear_current}, common start $(-400,600)$ m; three destinations spanning the centerline. Dashed line: $x_2=0$ (peak current, zero shear, peak curvature). Left: HJ; right: dead-reckoning.}
  \label{fig:shear_traj}
\end{figure}

\begin{figure}[htbp]
  \centering
  \includegraphics[width=0.75\linewidth]{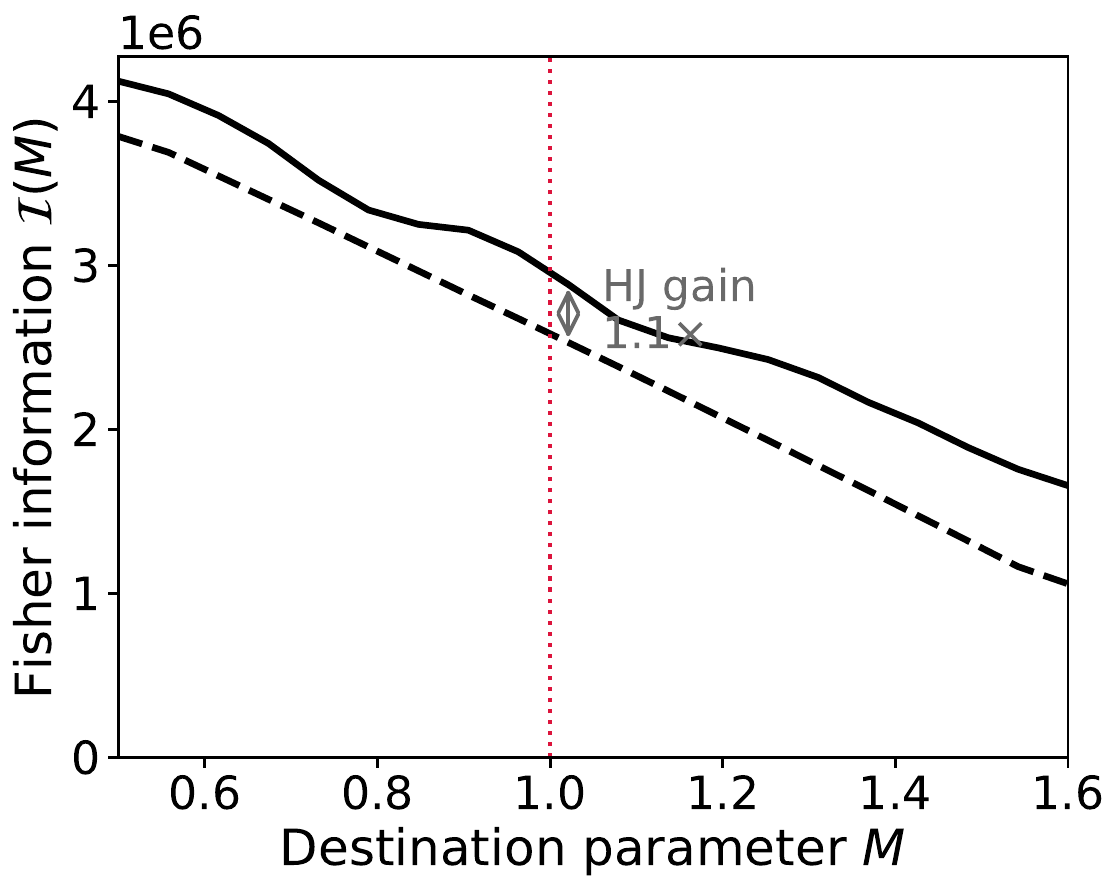}
  \caption{Fisher information $\FI(M)$ in the channel-shear field. HJ (solid); dead-reckoning (dashed); $M=1.0$ marked. HJ maintains an advantage across the tested range with the gap widening toward larger $M$.}
  \label{fig:FI_comparison}
\end{figure}

\paragraph{Path-dependence of information.}
Figure~\ref{fig:FI_comparison} plots $\FI(M)$ for both references. HJ lies above dead-reckoning throughout the tested range, with a modest gap (about $1.1\times$ at $M=1.0$) that widens toward larger $M$. The three channels of Theorem~\ref{thm:identifiability} respond differently to the shear geometry: the multiplicative drift-sensitivity channel $h$ favors trajectories that sample regions where the current-Hessian / Jacobi contraction is large, which in this example is associated with the high-curvature region near the centerline (a pattern where dead-reckoning paths can be competitive with HJ), while the additive channels $g$ and $\gamma$ favor trajectories whose destination sensitivity is largest, an advantage that HJ paths retain. In the present current field the additive channels dominate and HJ wins throughout; a scenario dominated by the $h$-channel would recover the intuition that a straight path through the high-curvature region can be more informative than a longer HJ path.

\FloatBarrier
\section{Active Sweep Design}
\label{sec:sweep_design}

Sections~\ref{sec:problem}--\ref{sec:numerics} treat the blue team's sweep $\xLbar$ as a prescribed mission pattern. In practice the blue team can choose its sweep to maximize the information it gathers about $M$. This section extends the framework to a sweep-dependent observation model and formulates the resulting design problem. The red team's HJ-optimal dynamics, the LQ sensitivity structure of Sections~\ref{sec:follower}--\ref{sec:identifiability}, and the combined-kernel Fisher information are all retained; only the blue team's observation channel changes.

\subsection{Distance-attenuated observation model}
\label{subsec:sweep_obs_model}

Let $\rho:\R_+\to[0,1]$ be a {detection function}: continuous, non-increasing, with $\rho(0)=1$ and $\lim_{s\to\infty}\rho(s)=0$. With
\begin{equation}
  r(t):=\|\xLvec(t)- \xFvec(t;M_0)\|
\end{equation}
the distance from the blue sweep to the nominal HJ path, we posit the following \emph{effective-noise observation model}: the blue team observes $\delta X_t$ through a channel whose per-unit-time noise standard deviation scales as $\sigma_F/\rho(r(t))$, so that the signal-to-noise ratio degrades with distance. Under this effective-noise model, the local likelihood is obtained from~\eqref{eq:loglik} by the multiplicative weight $\rho(r(t))^2$ under the integrals (see~\eqref{eq:loglik_sweep} below), and~\eqref{eq:FI_sweep} is the corresponding effective Fisher information. The direct-observation setting of Sections~\ref{sec:inference}--\ref{sec:identifiability} corresponds to $\rho\equiv1$. Two natural choices are
\[
  \rho_{\text{sharp}}(r)=\mathbf{1}\{r\le R\},\qquad
  \rho_{\text{smooth}}(r)=e^{-r^2/(2L^2)},
\]
modeling a sharp detection radius $R$ and a Gaussian rolloff with characteristic length $L$. The framework below requires only monotonicity of $\rho$; the existence argument of Section~\ref{subsec:sweep_problem} additionally uses continuity, which excludes $\rho_{\text{sharp}}$ but not smooth approximations of it.

\begin{remark}
\label{rem:effective_noise}
The effective-noise model is a stylized reduction of sensing degradation; physically, the red team's process noise is $\sigma_F$ regardless of the blue position. If the data are instead generated by the direct-observation model~\eqref{eq:deltaX_affine} and~\eqref{eq:loglik_sweep} is used as a weighted quasi-likelihood, the estimator~\eqref{eq:MLE_sweep} retains the error representation of Proposition~\ref{prop:MLE_error} with $\beta$ replaced by $\rho^2\beta$ in the stochastic integral, yielding the sandwich variance scale
\[
  \sigma_F^2\,\frac{\int_0^T\rho(r(t))^4\beta(t)^2\ud t}
  {\bigl(\int_0^T\rho(r(t))^2\beta(t)^2\ud t\bigr)^{2}}.
\]
Under the effective-noise model adopted in this section, the corresponding Fisher variance is $\FI_Y^{-1}$; the two scales agree, e.g., when $\rho\equiv1$, but differ in general. In this paper we adopt~\eqref{eq:sweep_design} as the effective-noise design criterion; under the direct-observation quasi-likelihood reading, a fully sandwich-optimal design would instead involve the corresponding sandwich ratio. We do not model an additional stochastic time-thinning of the observation.
\end{remark}

\subsection{Sweep-dependent effective Fisher information}
\label{subsec:sweep_FI}

Under Assumption~\ref{ass:affine} and the effective-noise observation model of Section~\ref{subsec:sweep_obs_model}, the local log-likelihood is
\begin{multline}
  \label{eq:loglik_sweep}
  \ell_{\xLbar}(M)=\frac{M}{\sigma_F^2}\!\int_0^T\!\rho(r(t))^2
  \beta(t)\bigl[\ud\delta X_t-\alpha(t)\ud t\bigr]\\
  -\frac{M^2}{2\sigma_F^2}\!\int_0^T\!\rho(r(t))^2\beta(t)^2\ud t,
\end{multline}
where $\alpha,\beta$ are as in \eqref{eq:alpha_beta_def}. The MLE
\begin{equation}
  \label{eq:MLE_sweep}
  \Mhat_{\xLbar}=\frac{\displaystyle\int_0^T\rho(r(t))^2\beta(t)
  \bigl[\ud\delta X_t-\alpha(t)\ud t\bigr]}
  {\displaystyle\int_0^T\rho(r(t))^2\beta(t)^2\ud t}
\end{equation}
generalizes~\eqref{eq:MLE}, and the corresponding Fisher information is
\begin{equation}
  \label{eq:FI_sweep}
  \FI_Y(M_0; \xLvec)=\frac{1}{\sigma_F^2}\int_0^T\rho(r(t))^2\,
  \E[\beta(t)^2]\,\ud t.
\end{equation}
Equation~\eqref{eq:FI_sweep} is the original information~\eqref{eq:FIM_expand} weighted pointwise by the sweep-dependent factor $\rho(r(t))^2$. Three properties follow at once:
\begin{enumerate}[label=(\roman*)]
\item $\FI_Y(M_0;\xLvec)\le\FI(M_0)$, with equality iff
  $\rho(r(t))=1$ a.e.
\item $\FI_Y(M_0; \xLvec)\to0$ as $\inf_{t\in[0,T]}r(t)\to\infty$.
\item $\FI_Y$ is monotone in $\rho^2$ pointwise.
\end{enumerate}
The sweep enters \eqref{eq:FI_sweep} only through the deterministic weight $\rho(r(t))^2$; the kernel $\E[\beta(t)^2]$ is a property of the red team's dynamics, precomputed once from \eqref{eq:mean_ode}--\eqref{eq:variance_ode}. The information geometry of Section~\ref{sec:identifiability} therefore plays the role of a \emph{prize density} over time.

\subsection{Sweep design problem}
\label{subsec:sweep_problem}

Let $\mathcal{X}^L$ denote the feasible set of sweep patterns. A natural choice is
\begin{multline}
  \label{eq:sweep_feasible}
  \mathcal{X}^L=\bigl\{\xLvec\in W^{1,\infty}([0,T];\Omega):\\
  \xLvec(0)=\mathbf x_0^L,\;\|\dot{\bar{\mathbf{x}}}^L\|\le v_L\bigr\}
\end{multline}
for an initial position $x_0^L\in\Omega$ and a speed bound $v_L>0$. The blue team's design problem is
\begin{multline}
  \label{eq:sweep_design}
  \xLvec_{\mathrm{opt}}=\arg\max_{\xLvec\in\mathcal{X}^L}\\
  \int_0^T \rho\bigl(\|\xLvec(t)-\xFvec(t;M_0)\|\bigr)^2
  \E[\beta(t)^2]\,\ud t.
\end{multline}
This is a continuous-time weighted-coverage problem: maximize weighted overlap with the predicted HJ path under sensor coverage $\rho^2$ and prize density $\E[\beta^2]$. Existence follows from standard compactness, provided the feasible curves remain in a compact subset of $\Omega$: bounded speed gives equicontinuity, Arzel\`a--Ascoli gives sequential compactness of $\mathcal{X}^L$ in the uniform topology, and the objective is continuous in $\xLvec$ under continuity of $\rho$.

\subsection{Tracking converse}
\label{subsec:sweep_tracking}

The next result connects the information limit of Section~\ref{sec:identifiability} (which assumes direct observation) to the achievable information under a speed-constrained sweep.

\begin{proposition}
\label{prop:tracking}
Assume Assumptions~\ref{ass:current}, \ref{ass:regular_char}, and~\ref{ass:affine}, $\sigma_F>0$, $\rho$ continuous and non-increasing on $[0,\infty)$ with $\rho(0)=1$, and $\FI(M_0)>0$.
\begin{enumerate}[label=(\roman*)]
\item If $\xFbar(\cdot;M_0)\in\mathcal{X}^L$ (for the feasible
  set~\eqref{eq:sweep_feasible}, this holds when $\mathbf x_0^L=\mathbf x_0^F$ and
  $v_L\ge\sup_{t\in[0,T]}\|\xFvdot(t;M_0)\|$), then
  $\sup_{\xLvec\in\mathcal{X}^L}\FI_Y(M_0;\xLvec)=\FI(M_0)$, attained
  at $\xLvec=\xFvec(\cdot;M_0)$.
\item If $\rho$ is strictly decreasing on $[0,\infty)$ and
  $\xLvec\in\mathcal{X}^L$ satisfies $\xLvec\ne\xFvec(\cdot;M_0)$ on a
  set $S\subset[0,T]$ such that
  $\{t\in S:\E[\beta(t)^2]>0\}$ has positive Lebesgue measure, then
  $\FI_Y(M_0;\xLvec)<\FI(M_0)$.
\end{enumerate}
\end{proposition}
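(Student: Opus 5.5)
The whole argument runs through the pointwise comparison of the integrands of \eqref{eq:FI_sweep} and \eqref{eq:FIM}. Write $w(t):=\E[\beta(t)^2]\ge 0$. This is a deterministic function of $t$ that does not depend on the sweep: by \eqref{eq:mean_ode}--\eqref{eq:variance_ode} at $M=M_0$ it equals $\gtil^2+2\gtil h\,m+h^2V$. It is integrable on $[0,T]$, since $\gtil,h,m,V$ are bounded there under Assumptions~\ref{ass:regular_char} and~\ref{ass:affine}. Then
\[
\FI(M_0)-\FI_Y(M_0;\xLvec)=\frac{1}{\sigma_F^2}\int_0^T\bigl(1-\rho(r(t))^2\bigr)w(t)\ud t .
\]
Because $\rho$ takes values in $[0,1]$, the factor $1-\rho^2$ is nonnegative. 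So the integrand is nonnegative, and $\FI_Y\le\FI$ holds for every feasible sweep. This is property (i) of Section~\ref{subsec:sweep_FI}.

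\textbf{Part (i).} I take $\xLvec=\xFvec(\cdot;M_0)$ (the statement's $\xFbar$ should be read as this two-dimensional reference path). First I check feasibility in \eqref{eq:sweep_feasible}:
\begin{itemize}
\item The initial condition holds because $\mathbf x_0^L=\mathbf x_0^F$.
\item The path lies in $W^{1,\infty}([0,T];\Omega)$ because the characteristic \eqref{eq:red_reference} is $C^1$ on $[0,T]$ under Assumption~\ref{ass:regular_char}, and its velocity is bounded by $\smax+\|\vc\|_\infty$.
\item The speed bound holds by the hypothesis on $v_L$.
\end{itemize}
For this choice $r(t)\equiv0$, so $\rho(r(t))=\rho(0)=1$, and hence $\FI_Y=\FI(M_0)$. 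Combined with the upper bound above, the supremum equals $\FI(M_0)$ and is attained at this sweep.

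\textbf{Part (ii).} On the set $S$ we have $r(t)>0$. Strict monotonicity of $\rho$ then gives $\rho(r(t))<\rho(0)=1$, so $1-\rho(r(t))^2>0$ there. Let $S'=\{t\in S: w(t)>0\}$; by hypothesis it has positive Lebesgue measure. On $S'$ the integrand $(1-\rho^2)w$ is strictly positive, and elsewhere it is nonnegative. A measurable function that is strictly positive on a set of positive measure has positive integral, so $\FI(M_0)-\FI_Y(M_0;\xLvec)>0$.

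The only point needing care is measurability of $S'$. The map $t\mapsto r(t)$ is continuous, since both curves are Lipschitz, and $\rho$ is continuous, so $t\mapsto\rho(r(t))^2$ is continuous. The function $w$ is continuous wherever $\gtil$ and $h$ are, and is measurable in any case. With these facts the argument is an elementary positivity statement. I expect no real obstacle beyond confirming feasibility in (i). The assumption $\FI(M_0)>0$ is used only to make the statement nonvacuous. It is also consistent with (ii), since the hypothesis of (ii) requires $w>0$ on a set of positive measure, which forces $\FI(M_0)>0$.
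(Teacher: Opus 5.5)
Your proof is correct and follows the paper's route. Both write the gap $\FI(M_0)-\FI_Y(M_0;\xLvec)=\sigma_F^{-2}\int_0^T\bigl(1-\rho(r(t))^2\bigr)\E[\beta(t)^2]\ud t$, use $r\equiv0$ for (i), and use strict monotonicity of $\rho$ on the positive-measure set for (ii). Your feasibility and measurability checks are extra detail the paper leaves implicit; in (i), feasibility is already a hypothesis.
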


\begin{proof}
For (i): if $\xLvec=\xFvec(\cdot;M_0)$, then $r(t)\equiv0$, so $\rho(r(t))^2\equiv1$, and \eqref{eq:FI_sweep} reduces to~\eqref{eq:FIM_expand}, giving $\FI_Y(M_0;\xLvec)=\FI(M_0)$. The upper bound $\FI_Y\le\FI$ holds for any $\xLvec$ by property~(i) of~\eqref{eq:FI_sweep}; the supremum is therefore attained at $\xFvec(\cdot;M_0)$. 

For (ii): the information gap is 
\begin{multline} 
  \label{eq:tracking_gap}
\Delta \FI=\FI(M_0)-\FI_Y(M_0;\xLvec)\\ =\frac{1}{\sigma_F^2}\int_0^T\bigl(1-\rho(r(t))^2\bigr) \E[\beta(t)^2]\ud t. 
\end{multline} 
The integrand is nonnegative. On the positive-measure subset $\{t\in S:\E[\beta(t)^2]>0\}$, we have $r(t)>0$, hence $\rho(r(t))^2<1$ by strict monotonicity, and $\E[\beta(t)^2]>0$; therefore $\Delta \FI>0$.
\end{proof}

Proposition~\ref{prop:tracking}(i) is an operational converse: provided the blue team can keep up with the red team's nominal HJ path, the information limit of Section~\ref{sec:identifiability} is achievable. Part~(ii) is a strict-gap statement: any deviation from exact tracking at times where the prize density $\E[\beta(t)^2]$ is positive, under a strictly decreasing detection function, loses information. The loss is given exactly by the weighted integral~\eqref{eq:tracking_gap}. For speed-limited blue teams that cannot track $\xFvec$ exactly, the optimization~\eqref{eq:sweep_design} maximizes the achievable information, equivalently minimizing this weighted loss over feasible sweeps; the experiments of Section~\ref{subsec:sweep_numerics} quantify the loss in a representative configuration.

\subsection{Numerical illustration}
\label{subsec:sweep_numerics}

We solve~\eqref{eq:sweep_design} in the harbor vortex of Section~\ref{subsec:exp_vortex} at $M_0=1$, with Gaussian detection function $\rho(r)=e^{-r^2/(2L^2)}$, $L=200$ m, and speed bound $v_L=3.0$ m/s. The kernel $\E[\beta(t)^2]$ is precomputed from the moment ODEs~\eqref{eq:mean_ode}--\eqref{eq:variance_ode} along the nominal HJ path. To ensure the red team remains in motion during the prize window (rather than parking at the destination late in the horizon), we shorten the observation horizon to $T=500$ s, below the red team's HJ travel time of approximately $556$ s. The prize density is then broadly distributed over $t\in[100,400]$ s and moves with the red team along the HJ trajectory.

The blue team's initial position is $\mathbf x_0^L=(-400,400)$ m, offset from the red start $\mathbf x_0^F=(-600,0)$ m by approximately $450$ m. The blue sweep $\xLvec$ is parameterized as a two-dimensional cubic spline with eight control points uniformly spaced in $[0,T]$ and pinned to $\mathbf x_0^L$ at $t=0$; \eqref{eq:sweep_design} is solved by L-BFGS-B with a quadratic penalty on speed-bound violations. We compare two sweeps against the direct-observation upper bound:
\begin{enumerate}[label=(\arabic*)]
\item \emph{Optimal sweep}: solution of \eqref{eq:sweep_design}.
\item \emph{Stationary}: $\xLvec(t)\equiv \mathbf x_0^L$, the blue team remains at its initial position throughout the observation.
\end{enumerate}

\begin{figure}[htbp]
  \centering
  \includegraphics[width=0.78\linewidth]{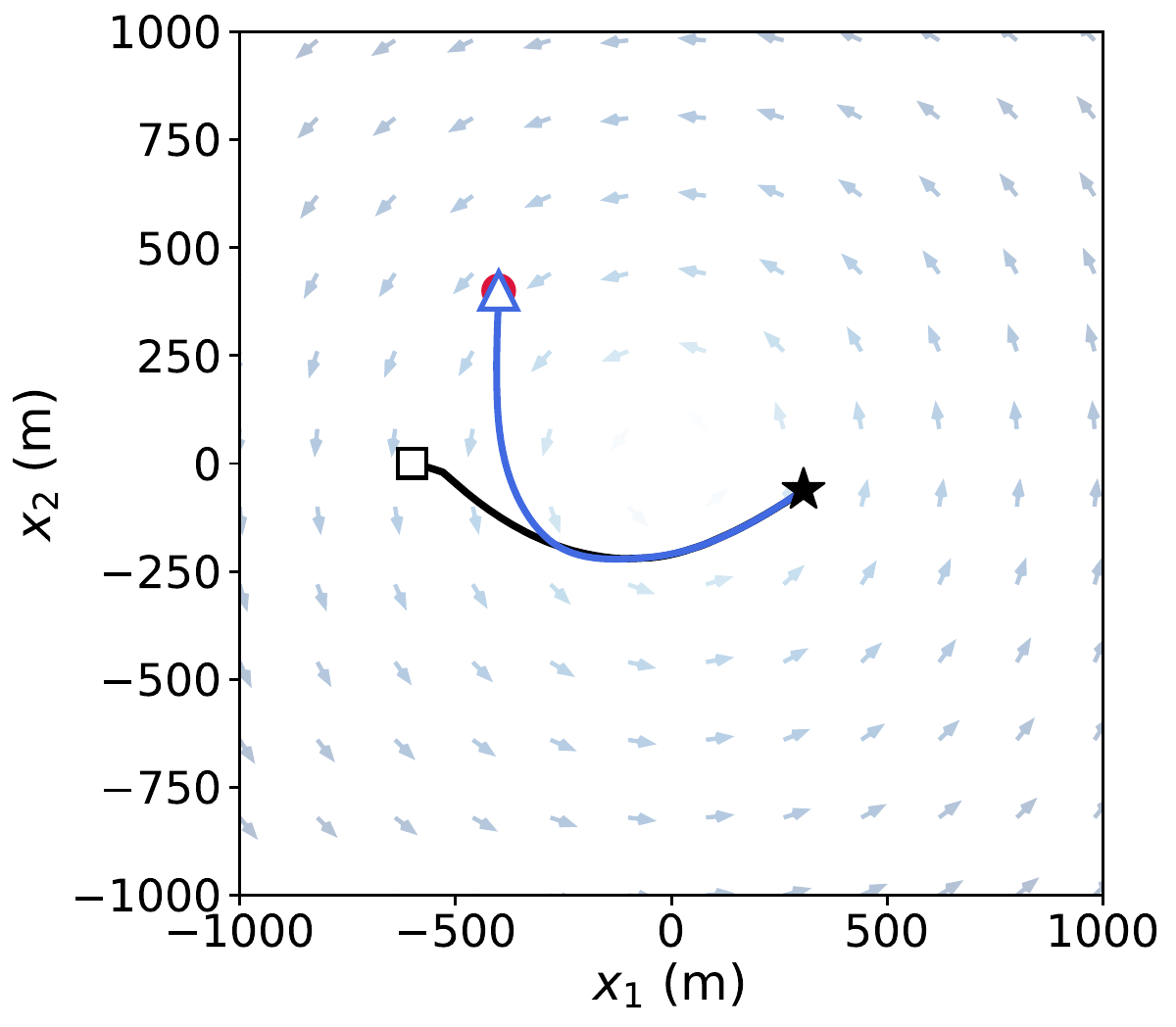}
  \caption{Sweep strategies in the harbor vortex \eqref{eq:vortex} at $M_0=1$, $T=500$ s. Black solid: nominal HJ path (white square: red start; black star: red position at $t=T$). Blue solid: optimal sweep~\eqref{eq:sweep_design}, $v_L=3.0$ m/s, from the white triangle $\mathbf x_0^L=(-400,400)$ m. Red dot: stationary baseline. The optimal sweep descends to intercept the moving HJ path and tracks the red team through the prize-density window.}
  \label{fig:sweep_compare}
\end{figure}

\begin{figure}[htbp]
  \centering
  \includegraphics[width=0.78\linewidth]{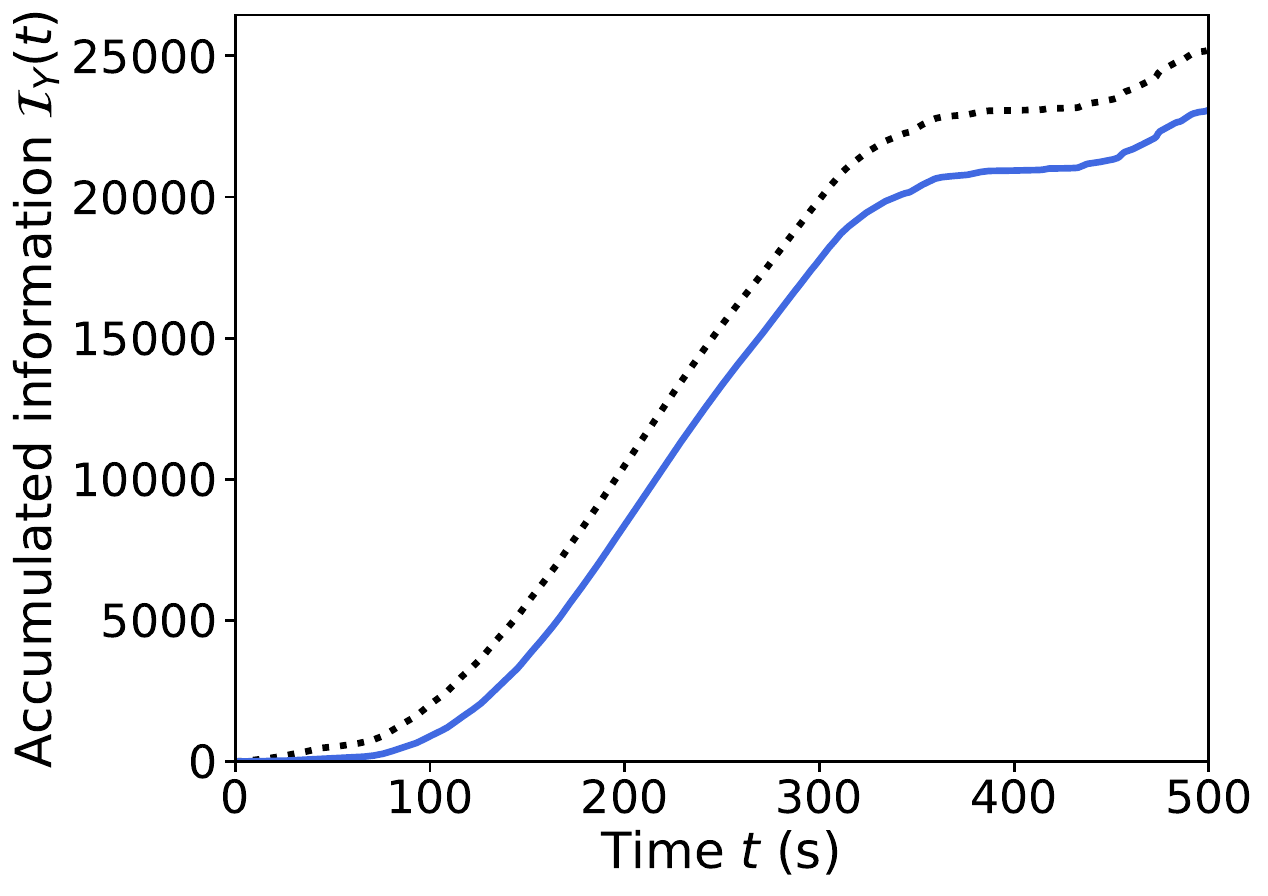}
  \caption{Accumulated information for the optimal sweep of Figure~\ref{fig:sweep_compare} $\mathcal{I}_Y(t) =\sigma_F^{-2}\!\int_0^t\!\rho(r(s))^2\E[\beta(s)^2]\ud s$. Black dotted: direct observation $\rho\equiv1$ (upper bound); blue solid: optimal sweep, recovering $91.5\%$. The stationary baseline (omitted) accumulates only $1.6\%$. The prize density is spread over $t\in[100,400]$ s, so the optimal sweep must keep moving.}
  \label{fig:sweep_FI}
\end{figure}

Two observations. First, the optimal sweep is genuinely a moving trajectory: blue must intercept the HJ path and follow the red team through the prize window. The recovery of $91.5\%$ falls short of the perfect-tracking limit of Proposition~\ref{prop:tracking}(i) because $v_L$ and the initial offset do not allow exact tracking over the full window; information is lost to the transit phase and residual tracking error. Second, the stationary baseline recovers only $1.6\%$: the red team passes through its detection range only briefly, and most of the prize density accumulates when the red team is elsewhere, as predicted by Proposition~\ref{prop:tracking}(ii).

\section{Conclusion}
\label{sec:conclusion}

We have developed a continuous-time likelihood theory for adversarial intent inference when the observed agent follows HJ-optimal paths in a current field. The framework yields a closed-form scalar MLE, a constant-memory multi-period estimator that is asymptotically efficient, and a geometric characterization of the underlying Fisher information.

Within the affine Gaussian model, inference quality is determined by the combined score kernel $\beta=\gtil+h\,\delta X_t$: the multiplicative component $h$ captures drift sensitivity, with leading geometric contribution the current-Hessian/Jacobi contraction, while the additive components $g$ and $\gamma$ capture, respectively, the policy-mean response to the HJ reference and the observation-model mismatch induced by the reference's own sensitivity to $M$. The HJ/current geometry thus enters the inference problem twice: it shapes the rational agent's reference path, and it drives the sensitivities that determine what can be recovered from observations. The result is a computable map from current-field geometry to destination inferability. Numerical experiments in vortex and channel-shear fields show that the combined kernel predicts both the relative ordering of HJ-optimal and dead-reckoning references and the variance scale of the resulting estimators.

Three extensions are natural. First, the modeling approximations of Remark~\ref{rem:AF_origin} and Assumption~\ref{ass:affine} can be relaxed; the geometric structure identified in Section~\ref{sec:identifiability} points to the right nonlinear objects for such an analysis. Second, the scalar parameterization $\mathbf x^F_T(M)=\Gamma(M)$ assumes a known one-dimensional destination curve and reduces inference to estimating a single scalar; in operationally realistic scenarios the destination may lie anywhere in a region or be one of a discrete set of candidates. The extension is analytically direct: the combined kernel $\beta(t)$ becomes vector-valued (one component per parameter), the Fisher information becomes a matrix, and identifiability becomes a rank condition. Discrete model selection over a finite candidate set reduces to pairwise likelihood ratios in the same framework. The numerical and computational considerations in this multi-parameter setting, in particular conjugate points and matrix-rank degeneracies of the Jacobi field, are substantive and worth developing separately. Third, the active sweep-design framework of Section~\ref{sec:sweep_design} treats the blue team's design problem in a fixed nominal model; iterating the design over a posterior on $M$ turns~\eqref{eq:sweep_design} into a Bayesian experiment-design problem, and coupling it to the sparse multi-period estimator of Section~\ref{subsec:sparse} would yield adaptive sweep-design protocols, with the sweep updated between periods as the posterior refines. Both the Bayesian-design and adaptive extensions preserve the core structure of HJ-induced kernel times sweep-dependent coverage, and are natural directions for future work.

\noindent{\bf Acknowledgements.} The authors thank Dr. Jeremy Brandman for helpful discussions and for detailed feedback on an earlier draft of this manuscript.

\noindent {\bf Declaration of generative AI technologies in the manuscript preparation process.} During the preparation of this work, the authors used ChatGPT for English language polishing and to improve the clarity of presentation. The authors reviewed and edited the content as needed and take full responsibility for the content of the submitted manuscript.

\bibliographystyle{plain}
\bibliography{reference}

@inproceedings{brandman2023globally,
  title={Globally time-optimal path planning for unmanned underwater vehicles in three-dimensional current fields using {H}amilton-{J}acobi partial differential equations},
  author={Brandman, Jeremy and Olson, Colin},
  booktitle={OCEANS 2023-MTS/IEEE US Gulf Coast},
  pages={1--10},
  year={2023},
  organization={IEEE}
}

@inproceedings{greeley2023reinforcement,
  title={Reinforcement learning for improved guidance and power management of unmanned underwater vehicles},
  author={Greeley, Brian and Brandman, Jeremy and Olson, Colin},
  booktitle={OCEANS 2023-MTS/IEEE US Gulf Coast},
  pages={1--10},
  year={2023},
  organization={IEEE}
}

@article{hu2024strategic,
  author  = {Hu, Ruimeng and Ralston, Daniel and Yang, Xu and Zhou, Haosheng},
  title   = {Strategic Inference in {Stackelberg} Games: Optimal Control
             for Revealing Adversary Intent},
  journal = {arXiv:2510.05641},
  year    = {2025},
}

@book{fossen2021handbook,
  title={Handbook of marine craft hydrodynamics and motion control},
  author={Fossen, Thor I},
  year={2011},
  publisher={John wiley \& Sons}
}

@article{lolla2014time,
  title={Time-optimal path planning in dynamic flows using level set equations: theory and schemes},
  author={Lolla, Tapovan and Lermusiaux, Pierre FJ and Ueckermann, Mattheus P and Haley Jr, Patrick J},
  journal={Ocean Dynamics},
  volume={64},
  number={10},
  pages={1373--1397},
  year={2014},
  publisher={Springer}
}

@article{zhao2004fast,
  title={A fast sweeping method for eikonal equations},
  author={Zhao, Hongkai},
  journal={Mathematics of Computation},
  volume={74},
  number={250},
  pages={603--627},
  year={2005}
}

@book{evans2002pde,
  title={Partial differential equations},
  author={Evans, Lawrence C},
  volume={19},
  year={2022},
  publisher={American Mathematical Society}
}

@book{basar1998dynamic,
  title={Dynamic noncooperative game theory},
  author={Ba{\c{s}}ar, Tamer and Olsder, Geert Jan},
  year={1998},
  publisher={SIAM}
}

@article{yong2002leader,
  title={A leader-follower stochastic linear quadratic differential game},
  author={Yong, Jiongmin},
  journal={SIAM Journal on Control and Optimization},
  volume={41},
  number={4},
  pages={1015--1041},
  year={2002},
  publisher={SIAM}
}

@book{kutoyants2013statistical,
  title={Statistical inference for ergodic diffusion processes},
  author={Kutoyants, Yury A},
  year={2004},
  publisher={Springer}
}

@book{liptser1977statistics,
  title={Statistics of random processes II: Applications},
  author={Liptser, Robert S. and Shiryaev, Albert N},
  volume={6},
  year={2013},
  publisher={Springer Science \& Business Media}
}

@article{schulman2017proximal,
  title={Proximal policy optimization algorithms},
  author={Schulman, John and Wolski, Filip and Dhariwal, Prafulla and Radford, Alec and Klimov, Oleg},
  journal={arXiv:1707.06347},
  year={2017}
}

@article{xu2026traveltime,
  title={Travel-time tomography from mean field game dynamics},
  author={Xu, Longqiang and Yin, Weishi and Liu, Hongyu},
  journal={arXiv:2605.15602},
  year={2026}
}

@book{sutton2018reinforcement,
  title={Reinforcement learning: An introduction},
  author={Sutton, Richard S and Barto, Andrew G and others},
  volume={1},
  number={1},
  year={1998},
  publisher={MIT press Cambridge}
}

@article{jaynes1957information,
  title={Information theory and statistical mechanics},
  author={Jaynes, Edwin T},
  journal={Physical Review},
  volume={106},
  number={4},
  pages={620},
  year={1957},
  publisher={APS}
}

@book{pukelsheim2006optimal,
  title={Optimal design of experiments},
  author={Pukelsheim, Friedrich},
  year={2006},
  publisher={SIAM}
}

@article{feldbaum1960dual,
  title={Dual control theory. {I}},
  author={Feldbaum, Aleksandr A.},
  journal={Avtomatika i Telemekhanika},
  volume={21},
  number={9},
  pages={1240--1249},
  year={1960}
}

@article{osher1988fronts,
  title={Fronts propagating with curvature-dependent speed: {A}lgorithms based on {H}amilton-{J}acobi formulations},
  author={Osher, Stanley and Sethian, James A},
  journal={Journal of Computational Physics},
  volume={79},
  number={1},
  pages={12--49},
  year={1988},
  publisher={Elsevier}
}

@inproceedings{garau2005path,
  title={Path planning of autonomous underwater vehicles in current fields with complex spatial variability: an {A}* approach},
  author={Garau, Bartolome and Alvarez, Alberto and Oliver, Gabriel},
  booktitle={Proceedings of the 2005 IEEE international conference on robotics and automation},
  pages={194--198},
  year={2005},
  organization={IEEE}
}

@article{petres2007path,
  title={Path planning for autonomous underwater vehicles},
  author={Petres, Cl\'{e}ment and Pailhas, Yan and Patron, Pedro and Petillot, Yvan and Evans, Jonathan and Lane, David},
  journal={IEEE Transactions on Robotics},
  volume={23},
  number={2},
  pages={331--341},
  year={2007},
  publisher={IEEE}
}

@article{ward2023active,
  title={Active inverse reward design},
  author={Mindermann, S{\"o}ren and Shah, Rohin and Gleave, Adam and Hadfield-Menell, Dylan},
  journal={arXiv:1809.03060},
  year={2018}
}

@article{lekkas2014guidance,
  title={Guidance and path-planning systems for autonomous vehicles},
  author={Lekkas, Anastasios M},
  journal={Norwegian University of Science and Technology: Trondheim, Norway},
  year={2014}
}

@book{karatzas1998brownian,
  title={Brownian motion and stochastic calculus},
  author={Karatzas, Ioannis and Shreve, Steven},
  year={2014},
  publisher={Springer}
}

@article{bergemann2019information,
  title={Information design: A unified perspective},
  author={Bergemann, Dirk and Morris, Stephen},
  journal={Journal of Economic Literature},
  volume={57},
  number={1},
  pages={44--95},
  year={2019},
  publisher={American Economic Association 2014 Broadway, Suite 305, Nashville, TN 37203-2425}
}

@article{mesbah2018stochastic,
  title={Stochastic model predictive control with active uncertainty learning: A survey on dual control},
  author={Mesbah, Ali},
  journal={Annual Reviews in Control},
  volume={45},
  pages={107--117},
  year={2018},
  publisher={Elsevier}
}

@article{crandall1983viscosity,
  title={Viscosity solutions of {H}amilton-{J}acobi equations},
  author={Crandall, Michael G. and Lions, Pierre-Louis},
  journal={Transactions of the American Mathematical Society},
  volume={277},
  number={1},
  pages={1--42},
  year={1983}
}

@book{bardi1997optimal,
  title={Optimal control and viscosity solutions of {H}amilton-{J}acobi-{B}ellman equations},
  author={Bardi, Martino and Capuzzo-Dolcetta, Italo},
  volume={12},
  year={1997},
  publisher={Springer}
}

@book{bar2004estimation,
  title={Estimation with applications to tracking and navigation: theory algorithms and software},
  author={Bar-Shalom, Yaakov and Li, X. Rong and Kirubarajan, Thiagalingam},
  year={2001},
  publisher={John Wiley \& Sons}
}

@article{ljung1999system,
  title={System identification: {T}heory for the user},
  author={Ljung, Lennart},
  journal={PTR Prentice Hall, Upper Saddle River, NJ},
  year={1999}
}

@article{astrom1971system,
  title={System identification—a survey},
  author={{\AA}str{\"o}m, Karl Johan and Eykhoff, Peter},
  journal={Automatica},
  volume={7},
  number={2},
  pages={123--162},
  year={1971},
  publisher={Elsevier}
}

@article{shoukry2015secure,
  title={Secure state estimation for cyber-physical systems under sensor attacks: {A} satisfiability modulo theory approach},
  author={Shoukry, Yasser and Nuzzo, Pierluigi and Puggelli, Alberto and Sangiovanni-Vincentelli, Alberto L and Seshia, Sanjit A and Tabuada, Paulo},
  journal={IEEE Transactions on Automatic Control},
  volume={62},
  number={10},
  pages={4917--4932},
  year={2017},
  publisher={IEEE}
}

@article{pajic2017attack,
  title={Attack-resilient state estimation for noisy dynamical systems},
  author={Pajic, Miroslav and Lee, Insup and Pappas, George J},
  journal={IEEE Transactions on Control of Network Systems},
  volume={4},
  number={1},
  pages={82--92},
  year={2017},
  publisher={IEEE}
}

@inproceedings{ward2025active,
  title={Active inverse learning in {S}tackelberg trajectory games},
  author={Ward, William and Yu, Yue and Levy, Jacob and Mehr, Negar and Fridovich-Keil, David and Topcu, Ufuk},
  booktitle={2025 American Control Conference (ACC)},
  pages={1547--1553},
  year={2025},
  organization={IEEE}
}

@inproceedings{zhou2025integrating,
  title     = {Integrating Sequential Hypothesis Testing into Adversarial Games: A Sun Zi-Inspired Framework},
  author    = {Zhou, Haosheng and Ralston, Daniel and Yang, Xu and
               Hu, Ruimeng},
  booktitle = {2025 {IEEE} 64th Conference on Decision and Control ({CDC})},
  pages     = {4540--4546},
  year      = {2025},
  organization = {IEEE},
}

@article{zhou2025adversarial,
  title   = {Adversarial Decision-Making in Partially Observable Multi-Agent Systems: A Sequential Hypothesis Testing Approach},
  author  = {Zhou, Haosheng and Ralston, Daniel and Yang, Xu and Hu, Ruimeng},
  journal = {{IEEE} Transactions on Control of Network Systems},
  note    = {to appear, arXiv:2509.03727},
  year    = {2025},
}

@article{ralston2026information,
  title   = {Information Revelation and Alignment Faking in Stochastic Differential Games},
  author  = {Ralston, Daniel and Yang, Xu and Hu, Ruimeng},
  journal = {arXiv:2603.17197},
  year    = {2026},
}

@article{kim2026deception,
  title   = {Deception in Linear-Quadratic Control},
  author  = {Kim, Yerin and Zhou, Haosheng and Benvenuti, Alexander and Hu, Ruimeng and Hale, Matthew},
  journal = {arXiv:2604.00227},
  year    = {2026},
}

\end{document}